\tikzstyle{mor} = [execute at begin node=$\scriptstyle, execute at end node=$]
\newcommand*{\MRref}[2]{ \href{http://www.ams.org/mathscinet-getitem?mr=#1}{MR #1}}
\newcommand*{\arxiv}[1]{ \href{http://www.arxiv.org/abs/#1}{arXiv:#1}}
\renewcommand{\PrintDOI}[1]{\href{http://dx.doi.org/#1}{DOI #1}
  \IfEmptyBibField{volume}{, (to appear in print)}{}}
\newcommand{\blank}{-}
\newcommand{\dd}{\,\mathrm{d}}
\newcommand{\id}{\mathrm{id}}
\newcommand{\inprod}[2]{\langle #1 \mid #2 \rangle}
\newcommand{\Cat}[1]{\ensuremath{\mathbf{#1}\xspace}}
\newcommand{\FinSets}{\Cat{FinSets}}
\newcommand{\KHaus}{\Cat{KHaus}}
\newcommand{\Cstar}{\Cat{C^*}}
\newcommand{\CCstar}{\Cat{cC^*}}
\newcommand{\KGrp}{\Cat{KGrp}}
\newcommand{\KQGrp}{\Cat{KQGrp}}
\newcommand{\CKQGrp}{\Cat{CKQGrp}}
\newcommand{\KConv}{\Cat{KConv}}
\newcommand{\EMod}{\Cat{EMod}}
\newcommand{\BEMod}{\Cat{BEMod}}
\newcommand{\op}{^{\mathrm{op}}} 
\newcommand{\Dst}{\mathcal{D}}
\newcommand{\Rad}{\mathcal{R}}
\newcommand{\B}{\mathcal{B}}
\newcommand{\Ef}{\mathcal{E}\!f}
\newcommand{\DM}{\mathcal{D\!M}}
\newcommand{\RR}{\mathbb{R}}
\newcommand{\CC}{\mathbb{C}}
\newcommand{\basis}[1]{\lambda_{#1}}
\DeclareMathOperator{\Hom}{Hom}
\DeclareMathOperator{\End}{End}
\DeclareMathOperator{\Aut}{Aut}
\DeclareMathOperator{\tr}{tr}
\DeclareMathOperator{\Spec}{Spec}
\DeclareMathOperator{\St}{St}
\newtheorem{theorem}{Theorem}
\newtheorem{lemma}[theorem]{Lemma}
\newtheorem{proposition}[theorem]{Proposition}
\theoremstyle{definition}
\newtheorem{definition}[theorem]{Definition}
\newtheorem{example}[theorem]{Example}
\newtheorem{examples}[theorem]{Examples}
\theoremstyle{remark}
\newcommand*{\Cst}{\mathrm C^*}
\newcommand*{\nb}{\nobreakdash}
\title{Duality for convex monoids}
\begin{document}
\author{Frank Roumen}
\email{f.roumen@math.ru.nl}
\address{Inst. for Mathematics, Astrophysics, and Particle Physics (IMAPP)\\
 Radboud University Nijmegen\\
 Heyendaalseweg 135, 6525 AJ Nijmegen\\
 The Netherlands}    
\author{Sutanu Roy}
\email{sr26@uottawa.ca}
\address{School of Mathematics and Statistics\\
 Carleton University \\
 1125 Colonel By Drive, Ottawa, ON K1S 5B6\\
 Canada}
 
 \begin{abstract}
     Every C*-algebra gives rise to an effect module and a convex space of
     states, which are connected via Kadison duality. We explore this duality
     in several examples, where the C*-algebra is equipped with the structure
     of a finite-dimensional Hopf algebra. When the Hopf algebra is the function
     algebra or group algebra of a finite group, the resulting state spaces
     form convex monoids. We will prove that both these convex monoids can be
     obtained from the other one by taking a coproduct of density matrices on
     the irreducible representations. We will also show that the same holds
     for a tensor product of a group and a function algebra. 
 \end{abstract}
 
 \subjclass[2010]{81R05, 81P10}
\keywords{quantum group, Hopf algebra, effect algebra, convex space, Kadison duality}

\maketitle

\section{Introduction}
 \label{sec:intro}
States and observables of a physical system are connected via dualities
between certain categories. There are several dualities that can 
be used for this connection. 
Known examples include the Gelfand duality theorem and the Kadison duality
theorem. For a system in classical physics, 
the state space is modeled by a topological space, and the
observables are given by functions on this space. In this way, the algebra of
observables forms a commutative \(\Cst\)\nb-algebra. 
The celebrated Gelfand theorem states that the category of locally compact Hausdorff 
spaces is dually equivalent to the category of commutative 
\(\Cst\)\nb-algebras, and thus it provides an intimate connection between
states and observables. A useful special case occurs when the
\(\Cst\)\nb-algebras under consideration have a unit. Gelfand duality in this
setting states that the category of unital \(\Cst\)\nb-algebras is dually
equivalent to the category of compact Hausdorff spaces.

Gelfand duality does not apply to quantum mechanical systems, since their
algebra of observables is in general a non-commutative \(\Cst\)\nb-algebra. 
There is no good non-commutative analogue of Gelfand duality, but there is a duality theorem
due to Kadison that can be useful to describe quantum systems. Kadison duality is
not based on \(\Cst\)\nb-algebras, but on the unit interval within a unital
\(\Cst\)\nb-algebra. This
unit interval forms a structure called an \emph{effect module}, and there is a dual
equivalence between a certain category  of 
effect modules and a certain category  of 
\emph{convex spaces}. The state space of a quantum system forms a convex space and 
the corresponding effect module contains its observables; hence
Kadison duality connects states 
and observables of quantum systems. It does not directly generalize Gelfand duality, 
since the unit interval of a \(\Cst\)\nb-algebra contains less information than the 
\(\Cst\)\nb-algebra itself.

When studying physical systems, one often wants to take the symmetry group of
the system into account. In the \(\Cst\)\nb-algebraic picture, this leads to quantum
groups. For ordinary Gelfand duality, we use locally compact Hausdorff spaces 
as state spaces.
If we take the symmetry of a system into account, the state space
becomes a locally compact group. On the dual side, this gives a coalgebra
structure on the \(\Cst\)\nb-algebra, making it into a structure called a quantum
group. There is an analogue of the Gelfand duality theorem that takes the
symmetry into account. This theorem states that the category of compact
(Hausdorff) groups is dually equivalent to the category of commutative compact
quantum groups. 

Summarizing, there are two dualities involving topological spaces and
\(\Cst\)\nb-algebras: one for systems without symmetry, and one for systems with
symmetry. Furthermore, Kadison duality relates convex spaces and effect
modules for systems without symmetry. In this article we shall will describe a 
variant of Kadison duality for systems with symmetry. This will lead to a notion of a quantum
group whose underlying algebra is an effect module instead of a \(\Cst\)\nb-algebra.
Schematically, we wish to complete the following diagram:
\begin{center}
    \begin{tikzpicture}
        \node (KHaus) {$\KHaus$};
        \node (CCstar) [right=of KHaus] {$\CCstar\op$};
        \node (KConv)  [below=of KHaus] {$\KConv$};
        \node (BEMod)  [below=of CCstar] {$\BEMod\op$};

        \node (KGrp) [right=of CCstar] {$\KGrp$};
        \node (CKQGrp) [right=of KGrp] {$\CKQGrp\op$};
        \node (question1) [below=of KGrp] {?};
        \node (question2) [below=of CKQGrp] {?$\op$};

        \path [->] (KHaus.north east) edge  node [mor, above] {C}
                   (CCstar.north west)
                   (CCstar.south west) edge  node [mor, below] {\Spec}
                   (KHaus.south east)
                   (KConv.north east) edge node [mor, above] {\Hom(-,[0,1])}
                   (BEMod.north west)
                   (BEMod.south west) edge node [mor, below] {\Hom(-,[0,1])}
                   (KConv.south east)
                   (KHaus) edge node [mor, left] {\Rad} (KConv)
                   (CCstar) edge node [mor, right] {[0,1]_{(-)}} (BEMod)

                   (KGrp.north east) edge  node [mor, above] {C}
                   (CKQGrp.north west)
                   (CKQGrp.south west) edge  node [mor, below] {\Spec}
                   (KGrp.south east)
                   (question1.north east) edge (question2.north west)
                   (question2.south west) edge (question1.south east)
                   (KGrp) edge node [mor, left] {\Rad} (question1)
                   (CKQGrp) edge node [mor, right] {[0,1]_{(-)}} (question2);
        \draw [white] (KHaus) -- node [black] {$\simeq$} (CCstar);
        \draw [white] (KConv) -- node [black] {$\simeq$} (BEMod);
        \draw [white] (KGrp) -- node [black] {$\simeq$} (CKQGrp);
        \draw [white] (question1) -- node [black] {$\simeq$} (question2);
    \end{tikzpicture}
\end{center}
The categories and functors occuring in this diagram will be explained in more
detail in the next section.
We will restrict our attention to finite groups.  In the theory of \(\Cst\)\nb-algebraic 
quantum groups, there is only one way to assign a commutative quantum group or Hopf\nb-algebra 
to any finite group. We show that there are two ways to assign an effect module 
(and a dual convex space) to a finite group, arising from two different
Hopf algebras associated to the group. Both ways to form ``effect quantum
groups'' are related via a version of Pontryagin duality. 

The outline of this paper is as follows. Section~\ref{sec:Preliminaries}
contains preliminary material about convex spaces, effect modules, and quantum
groups. In particular we will describe the various dualities that connect
these objects. In Section~\ref{sec:KadisonGroupAlg} we will determine the
effect modules and convex spaces associated to the group algebra and the
function algebra of a finite group. The two convex spaces obtained in this way
are both convex monoids, that is, monoids in the category of convex spaces.
The connection between these two monoids will be established in
Section~\ref{sec:ConvexPontryaginDuality}. We will prove 
that both convex monoids determine each other via essentially the
same construction: if $V_1, \ldots, V_k$ are the irreducible linear
representations of either of these monoids, then the coproduct $\DM(V_1) +
\cdots + \DM(V_k)$ is a convex monoid isomorphic to the other one. Finally, in
Section~\ref{sec:TensorProductDuality}, we will prove a related result for the
tensor product of a group and a function algebra.

\section{Preliminaries}
\label{sec:Preliminaries}

We will present the dualities alluded to in the Introduction in more detail
here. The most basic duality that we will use is Gelfand duality. Throughout
this paper, we will assume that all \(\Cst\)\nb-algebras we encounter have a unit.
Write $\Cstar$ for the category of \(\Cst\)\nb-algebras with *-homomorphisms as maps.
The full subcategory of commutative \(\Cst\)\nb-algebras is denoted $\CCstar$.
Furthermore write $\KHaus$ for the category of compact Hausdorff spaces with
continuous maps. If $X$ is a compact Hausdorff space, then the collection
$C(X)$ of complex-valued functions on $X$ is a commutative \(\Cst\)\nb-algebra with
pointwise operations. This construction gives a contravariant functor $C$ from
$\KHaus$ to $\CCstar$ by letting it act on morphisms via precomposition. The
Gelfand spectrum provides a functor in the other direction: if $A$ is a
commutative \(\Cst\)\nb-algebra, then its spectrum $\Spec A = \Hom_{\CCstar}(A,\CC)$ is
a compact Hausdorff space. The spectrum construction forms a contravariant
functor from $\CCstar$ to $\KHaus$, again using precomposition.

\begin{theorem}[Gelfand]
    \label{thm:GelfandDuality}
    The compositions $C \circ \Spec$ and $\Spec \circ C$ are naturally
    equivalent to the identity functor. Hence the categories $\KHaus$ and
    $\CCstar$ are dually equivalent.
\end{theorem}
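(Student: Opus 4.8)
The plan is to exhibit explicit natural transformations in both directions and show they are isomorphisms, handling the topological side and the algebraic side separately. For a compact Hausdorff space $X$, define $\eta_X \colon X \to \Spec(C(X))$ by sending a point $x$ to the evaluation character $\mathrm{ev}_x \colon f \mapsto f(x)$. For a commutative $\Cst$\nb-algebra $A$, define $\varepsilon_A \colon A \to C(\Spec A)$ to be the \emph{Gelfand transform}, $a \mapsto \hat a$, where $\hat a(\phi) = \phi(a)$. First I would check that these are well defined --- each $\mathrm{ev}_x$ is a unital *-homomorphism $C(X) \to \CC$, and each $\hat a$ is continuous for the weak-* topology on $\Spec A$ by the very definition of that topology --- and that $\eta$ and $\varepsilon$ are natural in $X$ and $A$ respectively; these naturality squares are routine chases using that both $C$ and $\Spec$ act by precomposition.

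The substance is proving that $\eta_X$ and $\varepsilon_A$ are isomorphisms. For $\eta_X$: it is continuous by the definition of the weak-* topology on $\Spec(C(X))$; it is injective because a compact Hausdorff space is normal, so by Urysohn's lemma distinct points are separated by a continuous function; and it is surjective because every character $\phi$ of $C(X)$ has a point in its zero set. Indeed, if not, then for each $x$ there is $f_x \in \ker\phi$ with $f_x(x) \neq 0$, hence $|f_x|^2 = \overline{f_x}\,f_x \in \ker\phi$ and $|f_x|^2(x) > 0$; by compactness finitely many of these sum to an element $g \in \ker\phi$ with $g(y) > 0$ for all $y$, which is invertible, contradicting that $\ker\phi$ is a proper ideal. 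Since $\ker\phi$ and $\ker\mathrm{ev}_x$ are both maximal ideals and $\ker\phi \subseteq \ker\mathrm{ev}_x$, they are equal, so $\phi = \mathrm{ev}_x$. A continuous bijection from a compact space to a Hausdorff space is a homeomorphism, so $\eta_X$ is an isomorphism in $\KHaus$.

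For $\varepsilon_A$: the analytic heart is that for self-adjoint $a \in A$ the Gelfand transform is isometric, $\norm{\hat a}_\infty = r(a) = \norm{a}$, where the first equality holds because the range of $\hat a$ is the spectrum of $a$, and the second is the spectral-radius formula combined with the $\Cst$\nb-identity $\norm{a^2} = \norm{a}^2$. Since $\varepsilon_A$ is a *-homomorphism and every element is a combination of self-adjoint ones, one deduces that $\varepsilon_A$ is isometric, hence injective with closed range. That range is a *-subalgebra of $C(\Spec A)$ containing the constant function $\hat 1 = 1$ and separating the points of $\Spec A$ (distinct characters $\phi \neq \psi$ disagree on some $a$, hence on $\hat a$), so by the Stone--Weierstrass theorem it is dense; being also closed, it equals $C(\Spec A)$, and $\varepsilon_A$ is an isometric *-isomorphism.

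Combining the two families of isomorphisms with their naturality yields natural equivalences $\id_{\KHaus} \cong C \circ \Spec$ and $\id_{\CCstar} \cong \Spec \circ C$, which is exactly the asserted dual equivalence. The main obstacle is the algebraic side: proving that the Gelfand transform is isometric genuinely requires the $\Cst$\nb-identity --- it fails for general commutative Banach algebras --- and surjectivity leans on Stone--Weierstrass rather than on anything formal.
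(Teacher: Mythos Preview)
Your proof is correct and follows the standard textbook route to Gelfand duality (evaluation map plus Urysohn and compactness on the topological side; Gelfand transform plus spectral radius and Stone--Weierstrass on the algebraic side). There is nothing to compare it against, however: the paper states Theorem~\ref{thm:GelfandDuality} as a classical result in the preliminaries and gives no proof of its own, so your write-up simply supplies what the paper takes for granted.
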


There is a more general version of Gelfand duality involving non-unital
\(\Cst\)\nb-algebras and locally compact spaces, but we will only be concerned
with compact spaces in the remainder of this article.

The Gelfand Theorem justifies viewing \(\Cst\)\nb-algebras as a non-commutative
generalization of spaces. Similarly it is useful to have a non-commutative
generalization of topological groups. This gives the notion of a quantum
group. There are several definitions of quantum groups; here we will use the
compact quantum groups from Woronowicz \cite{Woronowicz:Compact_pseudogroups}. For a general
overview of the theory of quantum groups see \cite{Timmermann:An_inv_to_QG_and_duality}.

\begin{definition}
    A \emph{compact quantum group} is a \(\Cst\)\nb-algebra $A$ equipped with a
    *-homomorphism $\Delta : A \to A \otimes A$ called the
    \emph{comultiplication}, such that
    \begin{itemize}
        \item The comultiplication is coassociative, i.e.\ $(\Delta \otimes
            \id_A) \circ \Delta = (\id_A \otimes \Delta) \circ \Delta$.
        \item The linear spans of $\Delta(A) (1 \otimes A)$ and $\Delta(A) (A
            \otimes 1)$ are dense in $A \otimes A$.
    \end{itemize}
\end{definition}

If $G$ is a compact Hausdorff group, then its function algebra $C(G)$ is a
commutative \(\Cst\)\nb-algebra. It can be made into a compact quantum group by
defining $\Delta : C(G) \to C(G) \otimes C(G) \cong C(G \times G)$,
$\Delta(\varphi)(g,h) = \varphi(gh)$. This construction provides a
group-theoretic analogue of Gelfand duality. Instead of compact 
spaces, we use compact groups. They constitute a category $\KGrp$
with continuous homomorphisms as maps. Morphisms between compact quantum
groups are unital $*$-homomorphisms preserving the comultiplication. They make
compact quantum groups into a category $\KQGrp$. As in Gelfand duality, we
want to consider the full subcategory $\CKQGrp$ of commutative compact quantum
groups.

\begin{theorem}
    \label{thm:QuantumGroupDuality}
    The functor $C : \KGrp\op \to \CKQGrp$ is a dual equivalence between the
    category of compact Hausdorff groups and commutative compact quantum
    groups.
\end{theorem}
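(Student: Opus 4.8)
The plan is to leverage Gelfand duality (Theorem~\ref{thm:GelfandDuality}) together with the fact that the category-theoretic structure of a compact group can be recovered from its function algebra. First I would check that $C$ is well-defined as a functor $\KGrp\op \to \CKQGrp$: given a compact Hausdorff group $G$, the comultiplication $\Delta(\varphi)(g,h) = \varphi(gh)$ is a unital $*$-homomorphism $C(G) \to C(G) \otimes C(G) \cong C(G \times G)$, and coassociativity of $\Delta$ is precisely associativity of the group multiplication transported across the isomorphism $C(G) \otimes C(G) \otimes C(G) \cong C(G \times G \times G)$; the density (cancellation) conditions follow because $(g,h) \mapsto (g, gh)$ and $(g,h) \mapsto (gh, h)$ are homeomorphisms of $G \times G$, so $\Delta(C(G))(1 \otimes C(G))$ and $\Delta(C(G))(C(G) \otimes 1)$ already span all of $C(G \times G)$. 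On morphisms, a continuous homomorphism $f : G \to H$ gives $C(f) : C(H) \to C(G)$ which intertwines the comultiplications exactly because $f$ preserves multiplication, so $C(f)$ is a morphism in $\CKQGrp$.

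Next I would construct the inverse functor. By Gelfand duality, the underlying $*$-algebra of any commutative compact quantum group $(A, \Delta)$ is $C(X)$ for a unique (up to homeomorphism) compact Hausdorff space $X = \Spec A$. The comultiplication $\Delta : C(X) \to C(X) \otimes C(X) \cong C(X \times X)$ corresponds under $\Spec$ to a continuous map $m : X \times X \to X$, and coassociativity of $\Delta$ translates into associativity of $m$. The density conditions translate into the statement that the maps $(x,y) \mapsto (x, m(x,y))$ and $(x,y) \mapsto (m(x,y), y)$ have dense image in $X \times X$; since these are continuous maps from a compact space, the image is closed, hence all of $X \times X$, so both maps are surjective — which, by a standard argument, forces the existence of a two-sided identity and of inverses, making $(X, m)$ a compact Hausdorff group. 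This assignment is functorial: a $\CKQGrp$-morphism $A \to B$ dualizes under $\Spec$ to a continuous map preserving the multiplication maps, i.e.\ a morphism in $\KGrp$.

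It then remains to observe that the two natural isomorphisms $C \circ \Spec \simeq \id$ and $\Spec \circ C \simeq \id$ supplied by Theorem~\ref{thm:GelfandDuality} are compatible with the extra comultiplication/multiplication structure, which is automatic since those structures are defined by transporting $\Delta$ (resp.\ $m$) along the very same natural isomorphisms. I expect the main obstacle to be the step deducing group axioms from the two surjectivity (cancellation) conditions: one must show that a compact Hausdorff semigroup in which both translation maps are surjective — equivalently, in which cancellation holds — is in fact a group. The standard route is to fix $x$, use surjectivity to find $e$ with $m(x,e) = x$, then use a compactness/minimal-closed-ideal argument (or the classical result that a compact Hausdorff cancellative semigroup is a group) to promote $e$ to a global identity and produce inverses; I would cite this rather than reprove it. Everything else is a routine transport of structure along Gelfand duality.
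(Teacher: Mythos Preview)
Your proof is correct and complete in outline, but note that the paper does not actually prove Theorem~\ref{thm:QuantumGroupDuality}: it states the result as known and merely remarks afterward that the inverse functor sends a commutative compact quantum group to its Gelfand spectrum, with the group multiplication induced by the comultiplication. Your proposal is precisely a fleshed-out version of that one-line hint, so the approaches coincide; you have simply supplied the details (well-definedness of $C$, the cancellation/density argument, and the compact cancellative semigroup $\Rightarrow$ group step) that the paper leaves to the reader or to the literature.
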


If $A$ is a commutative compact quantum group, then the underlying space of
its dual group is the spectrum of $A$, considered as \(\Cst\)\nb-algebra. The
multiplication on $G$ arises from the comultiplication on $A$.

There is another way to assign a compact quantum group to a finite group $G$,
namely the group algebra $C[G]$. The elements are again functions from $G$ to
$\CC$, but now the multiplication is given by convolution:
\[ \varphi * \psi(g) = \sum_{hk = g} \varphi(h) \psi(k) \]
The standard basis of $C[G]$ consists of Dirac functions $\basis{g}$ for $g\in
G$, defined by $\basis{g}(g) = 1$ and $\basis{g}(h) = 0$ for $h \neq g$. The
convolution product assumes a particularly easy form on these basis vectors,
namely $\basis{g} * \basis{h} = \basis{gh}$.  The comultiplication is defined
on basis vectors by $\Delta(\basis{g}) = \basis{g} \otimes \basis{g}$.

\subsection{Effect algebras and modules}
 \label{subsec:eff_mod}
Another duality that we will use involves the effects in a \(\Cst\)\nb-algebra. Effects
represent probabilistic measurements that can be performed on a physical
system. Let $A$ be any \(\Cst\)\nb-algebra. An element $a$ in $A$ is said to be
\emph{positive} if it can be written as $a = b^*b$ for some $b\in A$.
Positivity can be used to define an order on the self-adjoint part of $A$,
called the \emph{L\"owner order}. Let $a,b$ be self-adjoint elements in $A$,
then we say that $a \leq b$ if and only if $b-a$ is positive. An \emph{effect}
in $A$ is a self-adjoint $a \in A$ for which $0 \leq a \leq 1$.

Effects in a \(\Cst\)\nb-algebra can be organized into an algebraic structure called an
\emph{effect module}. Effect modules were introduced in \cite{Gudder-Pulmannova:Rep_the_conv}, 
based on earlier work on effect algebras, which started in \cite{Foulis-Bennett:Effect_alg_qnt_log}. 
For an overview of the theory about effect algebras, see \cite{Dvurecenskij-Pulmannov:New_trends}.

Roughly speaking, an effect module looks like a vector
space, but the addition is only a partial operation (since the sum of two
effects may lie above 1), and we can only multiply by scalars in the unit
interval $[0,1]$. Instead of complements with respect to $0$, we have
complements with respect to $1$. This means that for every effect $a$ there
exists an effect $b$ for which $a + b = 1$. The precise definition is as
follows.

\begin{definition}
    An \emph{effect module} consists of a set $A$ equipped with a partial
    binary operation $\boxplus$ called addition, a unary operation
    $(\blank)^{\bot}$ called orthocomplement, a scalar multiplication $\cdot :
    [0,1] \times A \to A$ and constants $0,1 \in A$, subject to the following
    axioms:
    \begin{itemize}
        \item The operation $\boxplus$ is commutative, which means that
            whenever $a \boxplus b$ is defined, then also $b \boxplus a$ is
            defined, and $a \boxplus b = b\boxplus a$.
        \item The operation $\boxplus$ is associative, which means that if $a
            \boxplus b$ and $(a \boxplus b) \boxplus c$ are defined, then also
            $b \boxplus c$ and $a \boxplus (b \boxplus c)$ are defined, and
            $(a \boxplus b) \boxplus c = a \boxplus (b\boxplus c)$.
        \item For every $a \in A$, $a \boxplus 0 = 0 \boxplus a = a$.
        \item For all $a,b \in A$, $a \boxplus b = 1$ if and only if $b =
            a^\bot$.
        \item If $a\boxplus 1$ is defined, then $a=0$.
        \item For all $r,s \in [0,1]$ and $a\in A$, $r \cdot (s \cdot a) =
            (rs) \cdot a$.
        \item If $r+s \leq 1$, then $(r + s)\cdot a = r\cdot a + s\cdot a$.
        \item If $a \boxplus b$ is defined, then $r \cdot (a \boxplus b) = r
            \cdot a \boxplus r \cdot b$.
        \item $1 \cdot a = a$.
    \end{itemize}
    Effect modules form a category $\EMod$, in which the morphisms are
    functions preserving addition, orthocomplement, scalar multiplication, and
    the constants $0$ and $1$.
\end{definition}

The easiest example of an effect module is the unit interval $[0,1]$. The
partial operation is addition, where $a \boxplus b$ is defined if and only if
$a + b \leq 1$. The orthocomplement is given by $a^\bot = 1-a$, and the scalar
multiplication is simply the multiplication on $[0,1]$. Another example are
the effects in a \(\Cst\)\nb-algebra, with the same operations. If $A$ is a \(\Cst\)\nb-algebra,
then its collection of effects is denoted $\Ef(A)$. Any Hilbert space $H$
gives rise to a \(\Cst\)\nb-algebra $\B(H)$, hence to an effect module $\Ef(\B(H))$. We
will often abbreviate this to $\Ef(H)$.

More generally, every partially ordered vector space $V$ over $\RR$ gives rise
to an effect module.  Pick an element $u\in V$ for which $u > 0$, then the
interval $[0,u] = \{ v \in V \mid 0 \leq v \leq u \}$ is an effect module.
Addition serves as the partial binary operation, and the orthocomplement is
$v^\bot = u-v$. The scalar multiplication is obtained by restricting the
scalar multiplication from $\RR$ to $[0,1]$. In fact, every effect module is
an interval in some partially ordered $\RR$-vector space, as shown in
\cite[Theorem 3.1]{Gudder-Pulmannova:Rep_the_conv}.

To work with infinite-dimensional vector spaces, it is often necessary to
require that they are complete in a certain metric. The same holds for effect
modules. If $A$ is an effect module, then a \emph{state} on $A$ is a morphism
$\sigma : A \to [0,1]$. The collection of all states is written as $\St(A)$.
Define a metric on $A$ via 
\[ d(a,b) = \sup_{\sigma \in \St(A)} | \sigma(a) - \sigma(b) | \]
We call the effect module $A$ a \emph{Banach effect module} if it is complete in
its associated metric. Banach effect modules give a full subcategory of
$\EMod$ written as $\BEMod$.

\subsection{Convex spaces}
\label{subsec:convex_spaces}
The state space of an effect module is always a compact convex space. We will
make this observation more precise by defining a suitable category of compact
convex spaces, following \cite{Semadeni:Categorical_convexity}. A topological
vector space is said to be \emph{locally convex} if its topology has a base of
convex open sets. Let $\KConv$ be the category whose objects are compact
convex subspaces of a locally convex vector space. A subspace $X \subset V$ is
called convex if, for all $x,y \in X$ and $\lambda \in [0,1]$, we have that
$\lambda x + (1-\lambda) y \in X$. A morphism between compact convex spaces $X
\subset V$ and $Y \subset W$ is a continuous map $f : X \to Y$ that preserves
convex combinations, i.e. $f(\lambda x + (1-\lambda)y) = \lambda f(x) +
(1-\lambda) f(y)$. Such a map is called \emph{affine}.

The state space of an effect module $A$ is contained in the vector space $\{
\varphi : A \to \RR \mid \varphi(a \boxplus b) = \varphi(a) + \varphi(b) \}$,
which is locally convex. Therefore $\St(A)$ is an object in the category
$\KConv$, and $\St$ is a contravariant functor from $\BEMod$ to $\KConv$. The
functor $\Hom_{\KConv}(\blank, [0,1])$ is a contravariant functor in the other
direction. The following result is taken from
\cite[Theorem 6]{Jacobs-Mandemaker:Expec_monad}, but see also
\cite[Section 4]{Semadeni:Categorical_convexity}.

\begin{theorem}
    The functors $\St$ and $\Hom_{\KConv}(\blank, [0,1])$ are inverses of each
    other. Hence the categories $\KConv$ and $\BEMod$ are dually equivalent.
    \label{thm:KadisonDuality}
\end{theorem}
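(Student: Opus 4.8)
The plan is to establish the dual equivalence by constructing natural isomorphisms $\id_{\BEMod} \cong \Hom_{\KConv}(\St(\blank),[0,1])$ and $\id_{\KConv} \cong \St(\Hom_{\KConv}(\blank,[0,1]))$. First I would verify that the two functors are well defined. For $\St$, given a morphism $f : A \to B$ of Banach effect modules, precomposition sends a state $\sigma : B \to [0,1]$ to $\sigma \circ f : A \to [0,1]$; this is continuous and affine for the weak topologies, and one checks compactness of $\St(A)$ via a Tychonoff/Banach--Alaoglu argument inside $[0,1]^A$, using that the defining conditions (additivity, preservation of $\bot$, scalar multiplication, constants) are closed conditions. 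For the other functor, given an affine continuous $g : X \to Y$, precomposition sends an affine $h : Y \to [0,1]$ to $h \circ g$, which is again affine; the pointwise effect-module operations on $\Hom_{\KConv}(Y,[0,1])$ make it a Banach effect module, with completeness coming from uniform convergence of affine functions on a compact space.

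Next I would build the unit and counit. For a Banach effect module $A$, define $\eta_A : A \to \Hom_{\KConv}(\St(A),[0,1])$ by $\eta_A(a)(\sigma) = \sigma(a)$, i.e.\ evaluation. It is routine that $\eta_A$ is a morphism of effect modules and natural in $A$. For a compact convex space $X$, define $\varepsilon_X : X \to \St(\Hom_{\KConv}(X,[0,1]))$ by $\varepsilon_X(x)(h) = h(x)$, also evaluation; this is affine, continuous, and natural. The core of the theorem is showing both maps are isomorphisms. That $\eta_A$ and $\varepsilon_X$ are injective follows from separation: states separate the points of a Banach effect module (by construction of the metric $d$, $d(a,b)=0$ implies $a=b$), and affine continuous maps to $[0,1]$ separate points of a compact convex subset of a locally convex space (Hahn--Banach, after rescaling an affine functional into $[0,1]$).

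Surjectivity is where the real work lies. For $\varepsilon_X$, one must show every state on $\Hom_{\KConv}(X,[0,1])$ is evaluation at some point of $X$; this is essentially a Riesz-type representation combined with the Krein--Milman theorem, realizing an abstract state as a probability measure on $X$ and then its barycenter as the representing point, using compactness and convexity of $X$ to guarantee the barycenter lies in $X$. For $\eta_A$, surjectivity onto all affine continuous functions $\St(A)\to[0,1]$ is the analogue of the statement that the order-unit span of $A$ is all of the continuous affine functions on its state space — a Kadison-type function representation theorem for effect modules. The main obstacle, and the step I expect to be most delicate, is precisely this surjectivity of $\eta_A$: one needs that $A$, being Banach, is ``large enough'' that its evaluation image is uniformly dense in and in fact equal to $\Hom_{\KConv}(\St(A),[0,1])$, which requires a density/approximation argument (Stone--Weierstrass-like, but adapted to the partial addition and the order-unit structure) together with the completeness hypothesis to upgrade density to equality. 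Rather than reproving this in full, I would invoke the cited representation theorem of Gudder--Pulmannová embedding $A$ as an interval $[0,u]$ in a partially ordered real vector space, transfer the problem to the well-known Kadison function representation for order-unit spaces, and then check that the Banach condition on $A$ matches completeness of that order-unit space in the order-unit norm. Finally, one verifies the triangle identities relating $\eta$ and $\varepsilon$, which is a direct computation with evaluation maps, completing the dual equivalence.
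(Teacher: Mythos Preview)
The paper does not supply its own proof of this theorem: it simply attributes the result to \cite[Theorem 6]{Jacobs-Mandemaker:Expec_monad} and \cite[Section 4]{Semadeni:Categorical_convexity}, so there is no in-paper argument to compare against. Your outline is sound and in fact tracks the strategy in those references: the evaluation maps $\eta_A$ and $\varepsilon_X$ are the standard unit/counit, separation gives injectivity (Hahn--Banach on the convex side, the state metric on the effect-module side), surjectivity of $\varepsilon_X$ is the barycenter/Choquet argument, and surjectivity of $\eta_A$ is reduced via the Gudder--Pulmannov\'a embedding $A\cong[0,u]$ to Kadison's function representation for complete order-unit spaces. The one point where you should be a bit more careful is the passage ``check that the Banach condition on $A$ matches completeness of that order-unit space in the order-unit norm'': you need that the enveloping ordered vector space is Archimedean with $u$ an order unit (which follows from the construction once $A$ separates states) and that the sup metric $d$ on $A$ coincides with the restriction of the order-unit norm, so completeness transfers; this is exactly what Jacobs--Mandemaker verify, but it is the only place where a genuine check is required rather than a formality.
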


\begin{examples}
    \label{ex:KadisonDuality}
    We give some examples of convex spaces and their dual effect modules.
    \begin{enumerate}
        \item If $X$ is a finite set, then let $\Dst(X) = \{ f : X \to [0,1]
            \mid \sum_{x\in X} f(x) = 1 \}$. This can be visualized as the
            standard simplex whose vertices are points in $X$. An element $f
            \in \Dst(X)$ is usually written as a formal convex combination
            $\sum_{x\in X} a_x x$, where the coefficients are the function
            values $a_x = f(x)$. They are subject to the condition $\sum_x a_x
            = 1$. This construction gives a functor $\Dst : \FinSets \to
            \KConv$, where on a morphism $\varphi : X \to Y$ we define
            $\Dst(\varphi)(\sum_x a_x x) = \sum_x a_x \varphi(x)$.
            The dual effect module of $\Dst(X)$ is $\Hom(\Dst(X), [0,1])$,
            which is isomorphic to $[0,1]^X$.
        \item In the above example, $\Dst(X)$ can be thought of as the set of
            discrete probability measures or distributions on $X$. There is a
            continuous analogue of this construction. Let $X$ now be a compact
            Hausdorff space, and let $\Sigma_X$ be its Borel $\sigma$-algebra.
            Denote the space of Radon measures on $X$ by $\Rad(X)$. A Radon
            measure is a probability measure $\mu : \Sigma_X \to [0,1]$ that
            satisfies
            \[ \mu(M) = \sup_{\substack{K \subseteq M \\ K \text{ compact}}} 
                \mu(K). \]
            In \cite{Furber-Jacobs:Prob_Gelfand_duality} it is shown that
            $\Rad$ forms a monad on the category of compact Hausdorff spaces.
            Its category of Eilenberg-Moore algebras is equivalent to
            $\KConv$, so convex spaces of the form $\Rad(X)$ can be thought of
            as the free convex spaces over a compact Hausdorff space. The dual
            effect module of $\Rad(X)$ is the collection of continuous
            functions from $X$ into $[0,1]$. This fact is a categorical
            reformulation of the Riesz-Markov theorem. To see this, observe
            that there is a map $\Rad(X) \to \Hom(C(X,[0,1]), [0,1])$ given by
            integration, i.e. $\mu \mapsto \int (-) \dd \mu$. The Riesz-Markov
            theorem states that this map is an isomorphism, so $\Rad(X)$ is
            the dual of $C(X,[0,1])$.
            This shows that the following
            diagram, connecting Gelfand and Kadison duality, commutes:
\begin{center}
    \begin{tikzpicture}
        \node (KHaus) {$\KHaus$};
        \node (CCstar) [right=of KHaus] {$\CCstar\op$};
        \node (KConv)  [below=of KHaus] {$\KConv$};
        \node (BEMod)  [below=of CCstar] {$\BEMod\op$};

        \path [->] (KHaus.north east) edge  node [mor, above] {C}
                   (CCstar.north west)
                   (CCstar.south west) edge  node [mor, below] {\Spec}
                   (KHaus.south east)
                   (KConv.north east) edge node [mor, above] {\Hom(-,[0,1])}
                   (BEMod.north west)
                   (BEMod.south west) edge node [mor, below] {\Hom(-,[0,1])}
                   (KConv.south east)
                   (KHaus) edge node [mor, left] {\Rad} (KConv)
                   (CCstar) edge node [mor, right] {[0,1]_{(-)}} (BEMod);

        \draw [white] (KHaus) -- node [black] {$\simeq$} (CCstar);
        \draw [white] (KConv) -- node [black] {$\simeq$} (BEMod);
    \end{tikzpicture}
\end{center}
        \item Let $H$ be a Hilbert space. A density matrix on $H$ is a
            positive trace-class operator $\rho : H \to H$ with trace 1. The
            collection of all density matrices forms a convex space denoted
            $\DM(H)$. The importance of this example lies in its connection to
            the effects on $H$: there is an isomorphism $\Ef(H) \to
            \Hom(\DM(H), [0,1])$, that maps an effect $a$ to the function
            $\rho \mapsto \tr(\rho a)$. Because this map is an isomorphism,
            $\Ef(H)$ is the dual effect module of $\DM(H)$.
    \end{enumerate}
\end{examples}

There are several ways to construct new convex spaces from old ones. In the
remainder of this paper we will sometimes use coproducts and tensor products
of convex spaces, so we will describe these briefly here.

The category $\KConv$ has all coproducts. The coproduct of two convex spaces
can be described geometrically, using the embedding in a locally convex vector
space. The following description is a slight modification of the
construction in \cite{Semadeni:Categorical_convexity}. Suppose that $X
\subseteq V$ and $Y \subseteq W$ are compact convex subsets of locally convex
vector spaces. Then the coproduct $X + Y$ can be embedded in the vector space
$V \oplus W \oplus \mathbb{R}$. To construct this coproduct, embed $X$ in this
larger vector space via the inclusion $x \mapsto (x,0,1)$, and embed $Y$ via
the inclusion $y \mapsto (0,y,0)$. The convex hull of the disjoint union of
$X$ and $Y$ is the coproduct of $X$ and $Y$. This is made precise in the
following.

\begin{proposition}
    If $X \subseteq V$ and $Y \subseteq W$ are objects in the category
    $\KConv$, then their coproduct is
    \[ X + Y = \{ (rx, (1-r)y, r) \mid r\in [0,1],\ x\in X,\ y\in Y \} \subseteq
        V \oplus W \oplus \mathbb{R}. \]
\end{proposition}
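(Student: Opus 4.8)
The plan is to exhibit the set on the right-hand side — call it $Z \subseteq V \oplus W \oplus \RR$ — together with two obvious inclusion maps, and to verify directly that it satisfies the universal property of the coproduct in $\KConv$.

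First I would check that $Z$ is genuinely an object of $\KConv$, i.e.\ a compact convex subset of the locally convex space $V \oplus W \oplus \RR$. Compactness is immediate: $Z$ is the image of the compact space $[0,1] \times X \times Y$ under the map $q \colon (r,x,y) \mapsto (rx,(1-r)y,r)$, which is continuous because scalar multiplication and addition in a topological vector space are continuous. For convexity, take two points $(rx,(1-r)y,r)$ and $(r'x',(1-r')y',r')$ of $Z$ and a scalar $\lambda \in [0,1]$; the third coordinate of their convex combination is $s = \lambda r + (1-\lambda)r'$, and when $0 < s < 1$ one rewrites the combination as $(s\,x'', (1-s)\,y'', s)$ with $x'' = \frac{\lambda r}{s} x + \frac{(1-\lambda)r'}{s} x' \in X$ and $y'' = \frac{\lambda(1-r)}{1-s} y + \frac{(1-\lambda)(1-r')}{1-s} y' \in Y$, using that in each case the coefficients are nonnegative and sum to $1$; the degenerate cases $s \in \{0,1\}$ are handled separately and are easier.

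Next I would introduce the coprojections $\iota_X \colon X \to Z$, $x \mapsto (x,0,1)$ and $\iota_Y \colon Y \to Z$, $y \mapsto (0,y,0)$, which are evidently continuous and affine, and verify the universal property. Given a convex space $T$ and morphisms $f \colon X \to T$, $g \colon Y \to T$, define $h \colon Z \to T$ by $h(rx,(1-r)y,r) = r f(x) + (1-r)g(y)$, which lies in $T$ by convexity of $T$. One must check that $h$ is well defined: the third coordinate determines $r$, and for $0 < r < 1$ the first and second coordinates then determine $x$ and $y$, while for $r = 0$ (resp.\ $r = 1$) only $y$ (resp.\ $x$) is determined, but the undetermined variable enters the formula for $h$ with coefficient $0$, so there is no ambiguity. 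Affineness of $h$ follows from affineness of $f$ and $g$ by the same bookkeeping as in the convexity computation above. The compatibility identities $h\circ\iota_X = f$ and $h\circ\iota_Y = g$ are a one-line check, and uniqueness follows because any competitor $h'$ must agree with $h$ pointwise, since $(rx,(1-r)y,r) = r\,\iota_X(x) + (1-r)\,\iota_Y(y)$ and morphisms in $\KConv$ preserve convex combinations.

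The one genuinely non-formal step is the continuity of $h$. Here I would use that $q \colon [0,1] \times X \times Y \to Z$ is a continuous surjection from a compact space onto a Hausdorff space, hence a closed map, hence a quotient map; since $h \circ q \colon (r,x,y) \mapsto r f(x) + (1-r)g(y)$ is continuous (again by continuity of the vector-space operations and of $f$ and $g$), continuity of $h$ follows from the universal property of quotient maps. This is the main obstacle in the sense that all the remaining verifications are routine affine algebra, whereas this is the step where the topology genuinely enters — and it is precisely compactness and the Hausdorff property of $X$, $Y$, $Z$ that make it work.
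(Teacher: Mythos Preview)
Your proof is correct and follows the same approach as the paper: define the coprojections $\iota_X$, $\iota_Y$ and the mediating map $h$ exactly as you do, then check compatibility and uniqueness via the identity $(rx,(1-r)y,r) = r\,\iota_X(x) + (1-r)\,\iota_Y(y)$. Your version is in fact more complete than the paper's, which omits the verifications that $Z$ is compact and convex and that $h$ is well defined, affine, and continuous --- your quotient-map argument for continuity is a genuine addition.
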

\begin{proof}
    Define embeddings $i_X : X \to X+Y$ and $i_Y : Y \to X + Y$ via $i_X(x) =
    (x,0,1)$ and $i_Y(y) = (0,y,0)$. Given affine maps $f : X \to Z$ and $g :
    Y \to Z$, define $h : X + Y \to Z$ by
    \[ h(rx, (1-r)y, r) = r f(x) + (1-r) g(y). \]
    Then $h \circ i_X = f$ and $h \circ i_Y = g$, so it remains to be shown
    that $h$ is the unique map with this property. Suppose that $h' : X + Y
    \to Z$ is an affine map for which $h \circ i_X = f$ and $h \circ i_Y = g$.
    Then 
    \[ h'(rx, (1-r)y, r) = h'\left( r (x,0,1) + (1-r) (0,y,0) \right) = r f(x)
    + (1-r) g(y), \]
    which proves uniqueness.
\end{proof}

\begin{example}
    Denote the one-point convex space by $1$. The coproduct $1 + \cdots + 1$
    of $n$ copies of this space is the convex hull of $n$ points, embedded in
    $\RR^{n-1}$ in such a way that they are all affinely independent.
    Therefore this coproduct is the standard simplex $\Dst(n)$.
\end{example}

We continue with a discussion of the tensor product of compact convex spaces.
If $X$, $Y$, and $Z$ are compact convex spaces, then a map $X \times Y \to Z$
is called \emph{bi-affine} is it is affine in both variables separately. A
\emph{tensor product} of $X$ and $Y$ is a compact convex space $X \otimes Y$
equipped with a bi-affine map $\otimes : X \times Y \to X \otimes Y$ such that
for every compact convex space $Z$ and every bi-affine $f : X \times Y \to Z$
there exists a unique affine map $g : X \otimes Y \to Z$ such that $g \circ
\otimes = f$. Semadeni proves in \cite{Semadeni:Categorical_convexity} that
any two compact convex spaces admit a tensor product, and that it is unique up
to isomorphism.

The above tensor product enjoys many good properties. The one-point convex
space $1$ acts as a unit for the tensor. Furthermore, the tensor product
distributes over coproducts. From these two facts, together with the
isomorphism $\Dst(n) \cong 1 + \cdots + 1$, it can be deduced that the tensor
product of standard simplices is $\Dst(n) \otimes \Dst(m) \cong \Dst(nm)$.

\section{Kadison duality for group and function algebras}
\label{sec:KadisonGroupAlg}

Let~\(G\) be a finite group. This gives rise to two Hopf\nb-algebras, or
compact quantum groups, namely the function algebra $C(G)$ and the group
algebra $C[G]$.
Of these two Hopf\nb-algebras, the function algebra is commutative but in
general not cocommutative, while for the group algebra, it is the other way
round. Therefore the duality from Theorem~\ref{thm:QuantumGroupDuality} only
applies to the function algebra $C(G)$. However, Kadison duality also applies to unit 
intervals of non-commutative \(\Cst\)\nb-algebras, so we can use this for both the group 
algebra and the function algebra. 

\begin{definition}
    A \emph{convex monoid} is an object $X$ of the category $\KConv$, together
    with a continuous multiplication map $\cdot : X \times X \to X$ and a
    constant $1\in X$, such that
    \begin{itemize}
        \item The operation $\cdot$ is affine in both variables separately,
            that is, $(\lambda x + (1-\lambda)y) \cdot z = \lambda x\cdot z +
            (1-\lambda) y\cdot z$ and similarly for convex combinations on the
            right.
        \item The operation $\cdot$ is associative.
        \item $1$ is a unit for $\cdot$.
    \end{itemize}
\end{definition}

Equivalently, a convex monoid is a convex space $X$ equipped with a map $X
\otimes X \to X$ that is associative and has a unit.

A variant of quantum groups in the framework of Kadison duality should 
give a duality between effect modules with a comultiplication and convex
monoids. In this section we describe these objects 
for the Hopf\nb-algebras \(C(G)\) and~\(C[G]\). We will start with the
function algebra $C(G)$. In fact, this algebra can be defined for any compact
group $G$, so we will now determine the effect module and convex space
associated to $C(G)$ for an arbitrary compact group $G$.
\begin{proposition}
 \label{the:eff_mod_st_sp}
 Let~\(G\) be a compact group. Then 
 \begin{enumerate}
  \item the effect module 
  \(\Ef(C(G))\cong\{\varphi\mid\text{ \(\varphi\colon G \to [0,1]\) is continuous} \}\).  
  Restriction of~\(\Delta\) on~\(\Ef(C(G))\) defines a comultiplication  
  map~\(\Ef(C(G))\to \Ef(C(G\times G))\) (which is a morphism of effect modules); 
  \item the state space~\(\St(\Ef(C(G))\) is isomorphic to the 
  space $\Rad(G)$ of Radon measures on $G$. 
  Moreover, $\Rad(G)$ is a convex monoid with respect to the 
  multiplication obtained by dualizing~\(\Delta\) on~\(\Ef(C(G))\).
 \end{enumerate}
\end{proposition}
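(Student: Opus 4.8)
The plan is to handle the two clauses of the proposition in turn, but both reduce to Gelfand duality (Theorem~\ref{thm:GelfandDuality}) together with Kadison duality (Theorem~\ref{thm:KadisonDuality}) and the Riesz--Markov identification $\Rad(G) \cong \St(C(G,[0,1]))$ from Examples~\ref{ex:KadisonDuality}. For part (1), the identification $\Ef(C(G)) \cong C(G,[0,1])$ is immediate from the definition of effects in a commutative \(\Cst\)\nb-algebra: a self-adjoint element of $C(G)$ is a real-valued continuous function, and the condition $0 \leq \varphi \leq 1$ in the L\"owner order is exactly the pointwise condition $\varphi(g) \in [0,1]$ for all $g$. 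To see that $\Delta$ restricts to a map of effect modules, I would first note that $\Delta$ is a unital $*$-homomorphism, hence positive, hence order-preserving; therefore it sends $\Ef(C(G))$ into $\Ef(C(G)\otimes C(G)) = \Ef(C(G\times G))$ (using $C(G)\otimes C(G)\cong C(G\times G)$). Preservation of the effect-module operations $\boxplus$, $(\blank)^\bot$, scalar multiplication and the constants $0,1$ is then a routine consequence of $\Delta$ being a unital linear (indeed $*$-homomorphism) map: $\Delta(\varphi) \boxplus \Delta(\psi)$ is defined whenever $\varphi\boxplus\psi$ is, because $\Delta$ preserves order and $\Delta(1)=1$, and equals $\Delta(\varphi\boxplus\psi)$ by linearity; the other axioms are similar.

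For part (2), the identification $\St(\Ef(C(G))) \cong \Rad(G)$ is exactly the content of the Riesz--Markov example: since $\Ef(C(G)) \cong C(G,[0,1])$, the state space $\St(\Ef(C(G))) = \Hom_{\KConv}(C(G,[0,1]),[0,1])$ is, by that example, isomorphic to $\Rad(G)$ via integration $\mu \mapsto \int(\blank)\dd\mu$. It remains to transport the comultiplication. The comultiplication on $\Ef(C(G))$ is a morphism $\Ef(C(G)) \to \Ef(C(G\times G))$ in $\EMod$; applying the contravariant functor $\St$ (equivalently $\Hom_{\KConv}(\blank,[0,1])$), and using that $\St$ sends $\Ef(C(G\times G)) \cong C(G\times G,[0,1])$ to $\Rad(G\times G)$, yields an affine map $\Rad(G\times G) \to \Rad(G)$. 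The key sub-point is to compose this with a natural map $\Rad(G)\times\Rad(G) \to \Rad(G\times G)$ — the product-measure construction $(\mu,\nu)\mapsto \mu\times\nu$, which is bi-affine and continuous in the weak-$*$ topology — to obtain a bi-affine multiplication $\Rad(G)\times\Rad(G)\to\Rad(G)$. Unwinding the definitions, this multiplication is the expected convolution of measures: $(\mu*\nu)(f) = \iint f(gh)\dd\mu(g)\dd\nu(h)$, and the unit is the Dirac measure $\delta_e$ at the identity of $G$.

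Finally I would verify the convex-monoid axioms. Bi-affinity is inherited from bi-affinity of the product-measure map and affineness of the dualized comultiplication; continuity likewise. Associativity and unitality are then cleanest to check on the dual side: coassociativity of $\Delta$ and the counit axiom for $C(G)$ dualize, under the equivalence of Theorem~\ref{thm:KadisonDuality}, to associativity and unitality of the multiplication on $\Rad(G)$ — here one must be slightly careful because the relevant structure on the effect-module side is a \emph{comonoid} (comultiplication plus counit) and I should record that the counit $\varepsilon\colon C(G)\to\CC$, $\varepsilon(\varphi)=\varphi(e)$, restricts to an effect-module morphism $\Ef(C(G))\to[0,1]$ whose dual picks out $\delta_e$. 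Alternatively, and perhaps more transparently, associativity and unitality can be checked directly at the level of Radon measures using the convolution formula above and Fubini's theorem. The main obstacle is the bookkeeping around the tensor/product-measure comparison map $\Rad(G)\times\Rad(G)\to\Rad(G\times G)$: one needs that it is continuous and bi-affine, and that dualizing $\Delta$ genuinely produces convolution rather than some twisted variant, which requires carefully tracking the isomorphism $C(G)\otimes C(G)\cong C(G\times G)$ and the Riesz--Markov identification through the computation. Everything else is a routine diagram chase through the two dualities already established.
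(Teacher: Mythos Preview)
Your proposal is correct and follows essentially the same approach as the paper: identify $\Ef(C(G))$ with $C(G,[0,1])$ via the pointwise L\"owner order, then invoke the Riesz--Markov identification from Examples~\ref{ex:KadisonDuality}(2) to get $\St(\Ef(C(G)))\cong\Rad(G)$. The paper's proof is considerably terser than yours---it barely verifies the convex-monoid axioms at all, relegating the description of the multiplication as convolution to a remark after the proof---so your extra bookkeeping around the product-measure map, the convolution formula, and associativity via Fubini or dualized coassociativity is more careful than what the paper actually writes.
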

\begin{proof}
 The effect module $\Ef(C(G))$ consists of all
 functions $\varphi$ for which $0 \leq \varphi \leq 1$. Since the
multiplication in $C(G)$ is pointwise, the order is also pointwise, and hence
$\Ef(C(G))$ consists of continuous maps $G \to [0,1]$. The comultiplication on
$C(G)$ induces a map of effect modules $ \Delta: \Ef(C(G)) \to \Ef(C(G \times
G))$, given by $\Delta(\varphi)(g,h) = \varphi(gh)$. Clearly, \(\Delta\) is
coassociative.

The state space of $\Ef(C(G))$ consists of all morphisms $\sigma :
\Hom(G,[0,1]) \to [0,1]$. By part 2 of Examples~\ref{ex:KadisonDuality}, this
is the same as the space of Radon measures $\Rad(G)$.


\end{proof} 

The multiplication on $\Rad(G)$ can also be decribed directly in terms of the
multiplication on $G$. Applying the functor $\Rad$ to the multiplication map
$\cdot : G \times G \to G$ gives a map $\Rad(G \times G) \to \Rad(G)$. Since
$\Rad(G) \otimes \Rad(G) \cong \Rad(G \times G)$, this provides a convex
monoid structure on $\Rad(G)$, which is the dual of $\Ef(C(G))$. 
This convex monoid has been studied categorically in \cite{Mislove:Prob_monads}.

The multiplication on the group algebra $C[G]$ is more complicated than the
one on the function algebra. Therefore the L\"owner order on $C[G]$
and the effect module are also more difficult to compute explicitly.
The algebra $C[G]$ is simultaneously a C*-algebra and a Hilbert space, and the
algebra structure is compatible with the inner product, so $C[G]$ forms a
Hilbert algebra.  We shall use some general facts about Hilbert algebras to
compute the effect module and the state space of~\(C[G]\). 

\begin{definition}
    A \emph{Hilbert algebra} is a *-algebra $A$ equipped with an inner product
    $\inprod{-}{-}$, such that
    \begin{enumerate}
        \item For all $a,b \in A$, $\inprod{a}{b} = \inprod{b^*}{a^*}$.
        \item For each $a \in A$, the map $b \mapsto ab$ is a bounded
            operator.
        \item For all $a,b,c \in A$, $\inprod{ab}{c} = \inprod{b}{a^*c}$.
        \item The linear span of $\{ ab \mid a,b \in A \}$ is dense in $A$.
    \end{enumerate}
\end{definition}
For more about Hilbert algebras see~\cite{Takesaki_Op_Alg_2}.
We will mainly work with \emph{unital} Hilbert algebras, in which the fourth
property holds automatically.

\begin{lemma}
    Let $A$ be a Hilbert algebra, and $f: A \to A$ a map of left $A$-actions,
    i.e. a map satisfying $f(ab) = a f(b)$. Then:
    \begin{enumerate}
        \item If $f$ is positive, then $\sqrt{f}$ is also a map of left
            $A$-actions.
        \item The adjoint $f^\dag$ is a map of left $A$-actions.
        \item $f^\dag(1) = f(1)^*$.
    \end{enumerate}
    \label{LemHilbertAlgProperties}
\end{lemma}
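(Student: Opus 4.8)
The three statements all concern how the left-$A$-action interacts with the operations of taking square roots and adjoints on a Hilbert algebra $A$. The plan is to prove them in the order (2), then (1), then (3), since (2) is the cleanest and its proof technique sets up the rest. For (2), I would compute $\inprod{f^\dag(ab)}{c}$ for arbitrary $c \in A$ and massage it using the defining adjunction $\inprod{f^\dag(x)}{y} = \inprod{x}{f(y)}$ together with axiom (3) of a Hilbert algebra, $\inprod{ab}{c} = \inprod{b}{a^*c}$. Concretely, $\inprod{f^\dag(ab)}{c} = \inprod{ab}{f(c)} = \inprod{b}{a^* f(c)}$, and since $f$ is a map of left $A$-actions, $a^* f(c) = f(a^* c)$, so this equals $\inprod{b}{f(a^*c)} = \inprod{f^\dag(b)}{a^*c} = \inprod{a f^\dag(b)}{c}$. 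As this holds for all $c$ and $A$ is (weakly) dense in its completion, $f^\dag(ab) = a f^\dag(b)$.

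For (1), the idea is that $\sqrt{f}$ is a limit of polynomials in $f$ with no constant term — more precisely, $\sqrt{f}$ lies in the von Neumann algebra (or just the norm-closed algebra) generated by $f$, and can be approximated in the appropriate operator topology by polynomials $p_n(f)$ with $p_n(0) = 0$. Each such polynomial is a map of left $A$-actions, because the set of bounded operators commuting with all the right multiplications $R_a : b \mapsto ba$ — equivalently, the operators $T$ with $T(ab) = a\,T(b)$, i.e. $T$ commutes with left multiplications $L_a$... wait, let me be careful: $f(ab) = a f(b)$ says $f \circ L_a = L_a \circ f$, so $f$ commutes with every $L_a$. The commutant of $\{L_a : a \in A\}$ is a von Neumann algebra, hence closed under the continuous functional calculus, so $\sqrt{f}$ is again in this commutant, i.e.\ $\sqrt{f} \circ L_a = L_a \circ \sqrt{f}$, which is exactly the claim. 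One must check that the continuous functional calculus applied to a positive operator with $g(0) = 0$ (here $g(t) = \sqrt t$) lands in the commutant; this is standard since the commutant is a von Neumann algebra and thus closed under bounded Borel functional calculus. Alternatively, avoid von Neumann algebra language entirely: approximate $\sqrt f$ uniformly on the spectrum by polynomials vanishing at $0$ (possible since $\sqrt{\phantom t}$ vanishes at $0$, using Weierstrass on $[0, \|f\|]$), each of which manifestly commutes with $L_a$, and pass to the norm limit.

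For (3), I would first establish that $f(1)^* = f^\dag(1)$ directly: for every $b \in A$, $\inprod{f^\dag(1)}{b} = \inprod{1}{f(b)} = \inprod{\overline{f(b)^*}}{\,} $ — better, use axiom (1), $\inprod{1}{f(b)} = \inprod{f(b)^*}{1^*} = \inprod{f(b)^*}{1}$. Now since $f$ is a map of left $A$-actions, $f(b) = f(b \cdot 1) = b\, f(1)$, so $f(b)^* = f(1)^* b^*$; hence $\inprod{f(b)^*}{1} = \inprod{f(1)^* b^*}{1} = \inprod{b^*}{f(1)\cdot 1} = \inprod{b^*}{f(1)}$, and applying axiom (1) once more, $\inprod{b^*}{f(1)} = \inprod{f(1)^*}{b}$. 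So $\inprod{f^\dag(1)}{b} = \inprod{f(1)^*}{b}$ for all $b$, giving $f^\dag(1) = f(1)^*$ by density.

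**Main obstacle.** The only genuinely delicate point is the density/nondegeneracy argument used to conclude $Tx = Sx$ from $\inprod{Tx}{c} = \inprod{Sx}{c}$ for all $c \in A$: this requires that the inner product is nondegenerate on $A$ (true, it is an inner product) and that $A$ is dense in the Hilbert space completion in which $f$, $f^\dag$, $\sqrt f$ all act as bounded operators — so implicitly one is working in the GNS-type completion $\overline A$, extending $f$ by continuity. One should state at the outset that $f$, being bounded, extends uniquely to $\overline A$, and that it suffices to check the action identities on the dense subspace $A$. The other mild subtlety is making precise, in whichever language one picks, that the square root from the functional calculus genuinely preserves the commutation with $\{L_a\}$; the cleanest route for this self-contained paper is the Weierstrass approximation by polynomials vanishing at the origin, which keeps everything at the level of norm limits of operators that obviously satisfy $p(f)(ab) = a\, p(f)(b)$.
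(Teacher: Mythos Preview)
Your proposal is correct and follows essentially the same approach as the paper: part (2) is the identical inner-product computation, part (1) is the paper's one-line ``$\sqrt{f}$ commutes with whatever $f$ commutes with'' spelled out via polynomial approximation, and part (3) is the same chain of Hilbert-algebra identities run in the opposite direction (the paper starts from $\inprod{f(1)^*}{x}$ and uses $f(x)=xf(1)$ at the third step). Your discussion of density and boundedness is more careful than the paper's, which simply works in $A$ and tacitly uses nondegeneracy of the inner product.
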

\begin{proof}
    \mbox{}
    \begin{enumerate}
        \item The square root $\sqrt{f}$ commutes with every operator that
            commutes with $f$.
        \item It suffices to prove that $f^\dag(ab)$ and $af^\dag(b)$ have the
            same inner product with any $x\in A$. This holds because
            \begin{align*}
                \inprod{f^\dag(ab)}{x} = \inprod{ab}{f(x)} = \inprod{b}{a^*
                    f(x)} = \inprod{b}{f(a^*x)} &= \inprod{f^\dag(b)}{a^*x}\\
                     &= \inprod{af^\dag(b)}{x}.
            \end{align*}
        \item $\inprod{f(1)^*}{x} = \inprod{x^{*}}{f(1)} = 
                   \inprod{1}{xf(1)}=\inprod{1}{f(x)} = \inprod{f^\dag(1)}{x}$.
            \qedhere
    \end{enumerate}
\end{proof}

Since any Hilbert algebra $A$ is a *-algebra, it can be ordered, and hence we
can speak about effects in the algebra. These are elements $a\in A$ such that
$0 \leq a \leq 1$. But $A$ is also a Hilbert space, so we can also speak about
effects on the Hilbert algebra, which are maps $\varepsilon : A \to A$ that
lie between $0$ and $\id_A$. The next result connects effects in $A$ with
effects on $A$.

\begin{proposition}
 \label{Prop:equiv_eff}
    Let $A$ be a unital Hilbert algebra. There is a bijective 
    correspondence between:
    \begin{enumerate}
        \item Effects in $A$, i.e. $a\in A$ for which $0 \leq a \leq 1$;
        \item Effects $\varepsilon : A \to A$ that are also maps of left
            $A$-actions.
    \end{enumerate}
\end{proposition}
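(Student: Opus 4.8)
The plan is to set up the correspondence by sending an effect $a \in A$ to the left-multiplication operator $\varepsilon_a : A \to A$, $\varepsilon_a(b) = ab$, and conversely sending an effect $\varepsilon$ that is a map of left $A$-actions to the element $\varepsilon(1) \in A$. First I would check that these assignments are mutually inverse: on the one hand, $\varepsilon_a(1) = a$ is immediate; on the other hand, if $\varepsilon$ is a map of left $A$-actions then $\varepsilon(b) = \varepsilon(b \cdot 1) = b \cdot \varepsilon(1)$, so $\varepsilon$ is left-multiplication by $\varepsilon(1)$ — here I am using that $A$ is unital, so every $b$ can be written as $b \cdot 1$, and also the defining identity $f(ab) = a f(b)$ with $a = b$, $b = 1$. (A small point: left-multiplication by $c$ is a map of \emph{left} $A$-actions precisely because the algebra is associative, $a(cb) = (ac)b$ would be the wrong associativity; one wants $c(ab)$ versus $a(cb)$, so one should be careful and perhaps phrase the correspondence using right-multiplication, or note that "map of left $A$-actions" here means commuting with left multiplications, which left multiplication by $c$ does iff... — this is exactly the subtlety to get right.)

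The substance of the proof is then to show that $a$ is an effect in $A$ (i.e. $0 \le a \le 1$ in the $*$-algebra order) if and only if the operator $\varepsilon_a$ is an effect on $A$ (i.e. $0 \le \varepsilon_a \le \id_A$ in the operator order on the Hilbert space $A$). The bridge is the inner product: using axiom (3) of a Hilbert algebra, $\inprod{\varepsilon_a(b)}{b} = \inprod{ab}{b} = \inprod{b}{a^* b}$, and for self-adjoint $a$ (which I would establish is forced on both sides) this is $\inprod{b}{ab}$, linking positivity of $\varepsilon_a$ as an operator to a positivity statement about $a$. I would first argue that $a = a^*$ iff $\varepsilon_a$ is self-adjoint: by Lemma~\ref{LemHilbertAlgProperties}(2)--(3), the Hilbert-space adjoint of a left-action map $f$ is again a left-action map with $f^\dag(1) = f(1)^*$, so $\varepsilon_a^\dag = \varepsilon_{a^*}$, and these agree iff $a = a^*$. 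Then I would show $0 \le \varepsilon_a$ as an operator iff $0 \le a$ in $A$: the forward direction uses Lemma~\ref{LemHilbertAlgProperties}(1), that $\sqrt{\varepsilon_a}$ is again a left-action map, hence equals $\varepsilon_c$ for $c = \sqrt{\varepsilon_a}(1)$, and $c = c^*$ with $\varepsilon_{c^2} = \varepsilon_a$, so (by injectivity of $a \mapsto \varepsilon_a$, which follows from evaluating at $1$) $a = c^2 = c^* c \ge 0$; the reverse direction: if $a = b^* b$ then $\varepsilon_a = \varepsilon_{b^* b}$ and one computes $\inprod{\varepsilon_a x}{x} = \inprod{b^* b x}{x} = \inprod{bx}{bx} = \norm{bx}^2 \ge 0$ using axioms (1) and (3). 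Finally, the same argument applied to $1 - a$ (noting $\varepsilon_{1-a} = \id_A - \varepsilon_a$) gives $a \le 1$ iff $\varepsilon_a \le \id_A$, completing the equivalence $0 \le a \le 1 \iff 0 \le \varepsilon_a \le \id_A$.

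The main obstacle I anticipate is the handling of square roots and positivity across the two different ordered structures — in particular, making rigorous that "positive element of the $*$-algebra $A$" and "positive operator $\varepsilon_a$ on the Hilbert space $A$" coincide, since $A$ need not be a $C^*$-algebra and its positive cone is defined purely algebraically as $\{b^* b\}$, whereas the operator $\varepsilon_a$ has a spectral-theoretic square root living a priori only in $\B(A)$. The key input that makes this work is precisely Lemma~\ref{LemHilbertAlgProperties}(1): the spectral square root of a positive left-action map is \emph{still} a left-action map, hence comes from an element of $A$, so one can pull the operator-theoretic square root back into $A$ and identify it with an algebraic square root. I would also need to be slightly careful that the operator $\varepsilon_a$ is bounded (axiom (2)) and that $a \mapsto \varepsilon_a$ is injective, both of which are easy but should be stated.
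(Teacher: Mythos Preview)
Your overall strategy matches the paper's: the bijection is $a \leftrightarrow \varepsilon$ via evaluation at $1$ and multiplication, positivity transfers via square roots using Lemma~\ref{LemHilbertAlgProperties}, and the bound $\leq 1$ follows by applying the same argument to $1-a$. The paper's computation showing $\varepsilon(1) = \sqrt{\varepsilon}(1)\sqrt{\varepsilon}(1)$ is precisely your observation that $\sqrt{\varepsilon} = \varepsilon_c$ forces $a = c^*c$.

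However, the point you flag parenthetically as ``small'' is the one genuine error, and you do not resolve it. Left multiplication $b \mapsto ab$ is \emph{not} a map of left $A$-actions unless $a$ is central, since $\varepsilon_a(cb) = acb$ while $c\,\varepsilon_a(b) = cab$. Your own computation $\varepsilon(b) = \varepsilon(b\cdot 1) = b\cdot\varepsilon(1)$ already reveals the correct correspondence: every left-$A$-linear map on $A$ is \emph{right} multiplication by $\varepsilon(1)$, and the paper accordingly defines $\varepsilon(x) = xa$. Once you commit to this, your positivity calculation in the reverse direction breaks: $\inprod{xb^*b}{x}$ does not reduce to $\norm{bx}^2$ via axiom~(3), which only moves factors on the left. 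The paper handles this by setting $\beta(x) = xb$, invoking Lemma~\ref{LemHilbertAlgProperties}(2)--(3) to get $\beta^\dag(x) = xb^*$, and observing $\varepsilon = \beta\beta^\dag$, which is manifestly positive. Your argument for the forward direction (operator $\Rightarrow$ element) survives the switch to right multiplication unchanged.
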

\begin{proof}
    If $\varepsilon : A \to A$ is an effect for which $\varepsilon(ab) = a
    \varepsilon(b)$, then $\varepsilon(1)$ is an effect in $A$. To show this,
    we will start by proving positivity. The effect $\varepsilon$ has a
    positive square root $\sqrt{\varepsilon}$.  We claim that
    $\sqrt{\varepsilon}(1) \sqrt{\varepsilon}(1) = \varepsilon(1)$.
    This follows from the following computation, using
    Lemma~\ref{LemHilbertAlgProperties}:
    \begin{align*}
        \inprod{\varepsilon(1)}{x} 
        &= \inprod{\sqrt{\varepsilon}\sqrt{\varepsilon}(1)}{x} 
        =\inprod{\sqrt{\varepsilon}(1)}{\sqrt{\varepsilon}^\dag (x)}\\ 
        &= \inprod{\sqrt{\varepsilon}(1)}{x \sqrt{\varepsilon}^\dag(1)}
        = \inprod{\sqrt{\varepsilon}(1)}{x \sqrt{\varepsilon}(1)^*} \\
        &=\inprod{\sqrt{\varepsilon}(1)x^*}{\sqrt{\varepsilon}(1)^*}
        =\inprod{x^{*}}{\sqrt{\varepsilon}(1)^*\sqrt{\varepsilon}(1)^*}
        = \inprod{\sqrt{\varepsilon}(1)\sqrt{\varepsilon}(1)}{x}.
    \end{align*}
    This shows that $\varepsilon(1)$ has a square root, so it is positive.
    Similarly, since the square root $\sqrt{I-\varepsilon}$ exists, the
    element $1-\varepsilon(1) \in A$ is positive. Therefore $\varepsilon(1)$
    is an effect.

    Conversely, if $a\in A$ is an effect, define $\varepsilon : A \to A$ by
    $\varepsilon(x) = xa$. Then $\varepsilon$ is clearly a map of left
    actions. Since $a$ is positive, there is a $b$ such that $a = b^* b$.
    Define $\beta : A \to A$ by $\beta(x) = xb$. Then $\beta\beta^\dag(x) = x
    b^* b = \varepsilon(x)$, so $\varepsilon$ is positive. Analogously we can 
    prove $\varepsilon \leq I$, hence $\varepsilon$ is an effect. It is easy
    to see that both constructions are mutually inverse.
\end{proof}

\begin{proposition}
    \label{PropEquivariantEffects}
    Let $V$ be a unitary representation of $G$. Write the decomposition of
    $V$ into irreducible representations as $V = n_1 V_1 \oplus \cdots \oplus
    n_k V_k$. Then the effect module $\{ \varepsilon : V \to V \mid
    \varepsilon \text{ is effect and intertwiner} \}$ is isomorphic to
    $\Ef(\mathbb{C}^{n_1}) \times \cdots \times \Ef(\mathbb{C}^{n_k})$.
\end{proposition}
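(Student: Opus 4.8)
The plan is to reduce this to a statement about intertwiners via Schur's lemma. First I would observe that an intertwiner $\varepsilon : V \to V$ decomposes according to the isotypic decomposition $V = n_1 V_1 \oplus \cdots \oplus n_k V_k$. Writing $V$ as $\bigoplus_i (\CC^{n_i} \otimes V_i)$ with $G$ acting only on the $V_i$ factor, Schur's lemma gives that the space of $G$-intertwiners $V \to V$ is isomorphic to $\bigoplus_i \End(\CC^{n_i})$, with each $f \in \End(\CC^{n_i})$ acting as $f \otimes \id_{V_i}$ on the $i$-th block. This is a standard fact, but I would spell it out since the effect structure needs to be tracked: an intertwiner cannot mix distinct isotypic components (no nonzero intertwiner $V_i \to V_j$ for $i \neq j$), so $\varepsilon$ is block-diagonal with $i$-th block of the form $\varepsilon_i \otimes \id_{V_i}$ for some $\varepsilon_i \in \End(\CC^{n_i})$.

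Next I would check that this algebra isomorphism respects all the structure needed to make it an isomorphism of effect modules. The self-adjoint intertwiners correspond exactly to the tuples $(\varepsilon_1, \ldots, \varepsilon_k)$ with each $\varepsilon_i$ self-adjoint, because $(\varepsilon_i \otimes \id_{V_i})^* = \varepsilon_i^* \otimes \id_{V_i}$ and the blocks are orthogonal. The order is then matched too: $0 \leq \varepsilon \leq \id_V$ as operators on $V$ if and only if $0 \leq \varepsilon_i \otimes \id_{V_i} \leq \id_{\CC^{n_i} \otimes V_i}$ for each $i$, and since $V_i \neq 0$, the spectrum of $\varepsilon_i \otimes \id_{V_i}$ equals the spectrum of $\varepsilon_i$, so this is equivalent to $0 \leq \varepsilon_i \leq \id_{\CC^{n_i}}$. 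Hence $\varepsilon$ is an effect-and-intertwiner precisely when each $\varepsilon_i$ is an effect in $\B(\CC^{n_i})$, i.e. lies in $\Ef(\CC^{n_i})$. Finally, the partial addition $\boxplus$, orthocomplement, scalar multiplication and constants $0, 1$ are all defined in terms of the operator structure and the order, all of which transport blockwise, so the correspondence $\varepsilon \mapsto (\varepsilon_1, \ldots, \varepsilon_k)$ is an isomorphism of effect modules onto $\Ef(\CC^{n_1}) \times \cdots \times \Ef(\CC^{n_k})$ (with the product effect module structure computed coordinatewise).

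The main obstacle — though it is more a matter of care than of difficulty — is handling the multiplicity spaces cleanly: one must fix a concrete isomorphism $V \cong \bigoplus_i \CC^{n_i} \otimes V_i$ and verify that the resulting identification of intertwiners with $\prod_i \End(\CC^{n_i})$ is genuinely an isomorphism of $*$-algebras and not merely a linear bijection, so that positivity is preserved in both directions. A mild subtlety worth flagging is that the inner product on each $V_i$ should be taken $G$-invariant (possible since $G$ is finite/compact, by averaging), so that $G$-intertwiners are exactly the maps commuting with the $G$-action and adjoints of intertwiners are again intertwiners; this is what lets the $*$-operation on operators restrict to the intertwiner algebra. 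With that in place, everything else is the routine Schur's-lemma bookkeeping sketched above.
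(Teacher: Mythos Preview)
Your proposal is correct and follows essentially the same approach as the paper: both arguments block-decompose an intertwiner along the isotypic decomposition and invoke Schur's Lemma twice (once to kill the off-diagonal blocks, once to reduce each diagonal block to a matrix of scalars), yielding the identification with $\prod_i \Ef(\CC^{n_i})$. Your write-up is in fact more careful than the paper's, which leaves the preservation of the order and $*$-structure implicit; your remarks on tracking positivity blockwise and on the need for $G$-invariant inner products are exactly the points one would want spelled out.
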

\begin{proof}
    An intertwining effect $\varepsilon : V \to V$ can be written as a matrix
    of maps $\varepsilon_{ij} : n_i V_i \to n_j V_j$. By Schur's Lemma, each
    $\varepsilon_{ij} = 0 $ for $i \neq j$. The effects $\varepsilon_{ii}$ can
    in turn be decomposed into an $n_i \times n_i$ matrix of maps $V_i \to
    V_i$, and these are all scalar multiples of the identity by Schur's Lemma.
    Therefore each $\varepsilon_{ii}$ corresponds to an effect on
    $\mathbb{C}^{n_i}$.
\end{proof}
\begin{theorem}
    \label{CorStateSpaceGroupAlgebra}
    Let $G$ be a finite group, and let $V_1, \ldots, V_k$ be its irreducible
    representations. Then 
    \begin{enumerate} 
     \item the effect module $\Ef(C[G])$ is isomorphic to 
    $\Ef(V_1) \times \cdots \times \Ef(V_k)$. The comultiplication map
    $\hat{\Delta}\colon\Ef(C[G]) \to \Ef(C[G \times G])$ given by $\hat{\Delta} 
    \left( \sum_g a_g \basis{g} \right) = \sum_g a_g \basis{(g,g)}$;
    \item The state space of $C[G]$, denoted by~\(\St(C[G])\), 
    is the coproduct $\DM(V_1) +\cdots + \DM(V_k)$ in the category of convex 
    spaces. Moreover, \(\St(C[G])\) is a convex monoid with respect to the 
    multiplication~$\mu\colon\St(C[G \times G]) \to \St(C[G])$ defined as 
    the linear extension of $\mu(\sigma)(\basis{g}) = \sigma(\basis{(g,g)})$.
  \end{enumerate}  
\end{theorem}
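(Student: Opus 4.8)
The plan is to derive both parts from the structure theory already set up: $C[G]$ is a unital Hilbert algebra, so Propositions~\ref{Prop:equiv_eff} and~\ref{PropEquivariantEffects} apply once we identify the relevant left action and representation. First I would observe that $C[G]$, acting on itself by left multiplication (equivalently, by convolution), is exactly the left regular representation of $G$; by Peter--Weyl / Maschke for finite groups it decomposes as $C[G] \cong \bigoplus_i n_i V_i$ with $n_i = \dim V_i$, i.e.\ each irreducible $V_i$ occurs with multiplicity equal to its dimension. Combining Proposition~\ref{Prop:equiv_eff} (effects in $C[G]$ $\leftrightarrow$ intertwining effects on the Hilbert space $C[G]$) with Proposition~\ref{PropEquivariantEffects} (intertwining effects on $\bigoplus n_i V_i$ $\leftrightarrow$ $\prod_i \Ef(\CC^{n_i})$) gives $\Ef(C[G]) \cong \prod_i \Ef(\CC^{\dim V_i}) = \prod_i \Ef(V_i)$, which is part~(1). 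For the comultiplication formula, I would note that $\hat\Delta$ on $C[G]$ is the algebra map determined by $\basis{g}\mapsto \basis{g}\otimes\basis{g} = \basis{(g,g)}$ (as stated in the preliminaries), restrict it to effects, and check it is a morphism of effect modules — this is routine since $\hat\Delta$ is a unital $*$-homomorphism and such maps preserve the L\"owner order and hence the effect-module structure; coassociativity is inherited from the Hopf algebra.

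For part~(2), I would invoke Kadison duality (Theorem~\ref{thm:KadisonDuality}): the state space $\St(C[G])$ is dual to $\Ef(C[G])$, so from part~(1) and the fact that $\St$ turns products of effect modules into coproducts of convex spaces (since $\St$ is a duality, it sends products to coproducts), we get $\St(C[G]) \cong \St(\Ef(V_1)) + \cdots + \St(\Ef(V_k))$. Then I would use part~(3) of Examples~\ref{ex:KadisonDuality}: $\St(\Ef(V_i)) \cong \DM(V_i)$, the space of density matrices, since $\Ef(V_i)$ is the dual effect module of $\DM(V_i)$. This yields $\St(C[G]) \cong \DM(V_1) + \cdots + \DM(V_k)$.

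Finally, for the convex monoid structure, I would dualize the comultiplication. Since $\hat\Delta : \Ef(C[G]) \to \Ef(C[G\times G]) \cong \Ef(C[G])\times\Ef(C[G])$ is a morphism of (Banach) effect modules, applying $\St$ gives an affine map $\mu : \St(C[G\times G]) \to \St(C[G])$; under the identification $\St(C[G\times G]) \cong \St(C[G]) \otimes \St(C[G])$ (from the duality and the fact that products of effect modules dualize to tensor products of convex spaces, analogous to $\Dst(n)\otimes\Dst(m)\cong\Dst(nm)$) this is a bi-affine multiplication. Its formula on basis elements, $\mu(\sigma)(\basis{g}) = \sigma(\basis{(g,g)})$, is just the transpose of $\hat\Delta(\basis{g}) = \basis{(g,g)}$. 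Associativity and unitality of $\mu$ follow by dualizing coassociativity and counitality of $\hat\Delta$, which hold because $C[G]$ is a Hopf algebra.

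The main obstacle I anticipate is the bookkeeping around the identification $C[G\times G] \cong C[G]\otimes C[G]$ and checking that the tensor product of convex spaces is compatible with this, so that $\mu$ genuinely lands in the convex-monoid framework (a map out of $\St(C[G])\otimes\St(C[G])$) rather than merely being a separately-affine map — one must verify the duality sends the effect-module product $\Ef(C[G])\times\Ef(C[G])$ to the convex tensor product, using Semadeni's distributivity of $\otimes$ over coproducts together with part~(1). Everything else is either a direct appeal to the already-established propositions or a routine transpose computation on the Dirac basis.
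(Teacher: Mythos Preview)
Your plan for part~(1) and the first half of part~(2) is exactly the paper's argument: identify effects in $C[G]$ with intertwining effects on the regular representation via Proposition~\ref{Prop:equiv_eff}, decompose the regular representation with multiplicities $n_i=\dim V_i$, apply Proposition~\ref{PropEquivariantEffects}, and then dualize using that $\St$ turns products into coproducts and $\St(\Ef(V_i))\cong\DM(V_i)$.

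There is, however, a genuine slip in your treatment of the monoid structure. The claimed isomorphism $\Ef(C[G\times G])\cong\Ef(C[G])\times\Ef(C[G])$ is false in general: by Artin--Wedderburn $C[G]\cong\bigoplus_i M_{n_i}(\CC)$, so $C[G\times G]\cong\bigoplus_{i,j}M_{n_in_j}(\CC)$ and hence $\Ef(C[G\times G])\cong\prod_{i,j}\Ef(\CC^{n_in_j})$, which is not $\prod_i\Ef(\CC^{n_i})\times\prod_j\Ef(\CC^{n_j})$ once some $n_i>1$ (try $G=S_3$). Relatedly, the Kadison dual of a \emph{product} of effect modules is a \emph{coproduct} of convex spaces, not a tensor product, so the route ``dualize $\Ef(C[G])\times\Ef(C[G])$ to get $\St(C[G])\otimes\St(C[G])$'' does not work as stated. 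The paper sidesteps this entirely: it simply dualizes $\hat\Delta$ to obtain $\mu:\St(C[G\times G])\to\St(C[G])$ and then reads off the binary operation on $\St(C[G])$ by precomposing with the product-state map $(\sigma,\tau)\mapsto\sigma\otimes\tau$, yielding the bi-affine formula $(\sigma\cdot\tau)(\basis{g})=\sigma(\basis{g})\tau(\basis{g})$. That is all that is needed for the convex-monoid structure; you do not need the effect-module product identification, and you should drop it.
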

\begin{proof}
    By Proposition~\ref{Prop:equiv_eff}, the effect module $\Ef(C[G])$ is isomorphic
    to $\{ \varepsilon : C[G] \to C[G] \mid \varepsilon \text{ is effect and
    map of left $C[G]$-actions} \}$. The condition that $\varepsilon$ is a map
    of left $C[G]$-actions means that it is an intertwiner from the regular
    $G$-representation $C[G]$ to itself. The regular representation decomposes 
    as $C[G] = n_1 V_1 \oplus \cdots \oplus n_k V_k$, so by
    Proposition~\ref{PropEquivariantEffects} $\Ef(C[G])$ is isomorphic to
    $\Ef(\mathbb{C}^{\dim V_1}) \times \cdots \times \Ef(\mathbb{C}^{\dim
    V_k}) \cong \Ef(V_1) \times \cdots \times \Ef(V_k)$. This is a Banach
    effect module, so we can use the duality between convex compact spaces and
    Banach effect modules to determine the dual space. Dualizing turns
    products into coproducts, so the dual space is $\DM(V_1) + \cdots +
    \DM(V_k)$. Clearly, \(\hat{\Delta}\) is coassociative. 
    
    Dualizing~\(\hat{\Delta}\) gives~\(\mu\) on the state space 
    \(\St(C[G])\).  The convex monoid structure on the state space of the
    group algebra satisfies $(\sigma \cdot \tau)(\basis{g}) =
    \sigma(\basis{g})\tau(\basis{g})$ on basis vectors. 
\end{proof}
\section{Convex Pontryagin duality for group and function algebras}
\label{sec:ConvexPontryaginDuality}

The group algebra and the function algebra associated to a finite group are
both finite-dimensional Hopf algebras. These are related via a non-commutative
generalization of Pontryagin duality, see e.g. \cite{Timmermann:An_inv_to_QG_and_duality} for
details. In the previous section, we found two convex monoids that can be
obtained from a finite group: the state space $\Dst(G)$ of the function
algebra, and the state space $\DM(V_1) + \cdots + \DM(V_k)$ of the group
algebra, where the $V_i$ are the irreducible representations of $G$. This
section will present a construction to convert these two convex monoids into
each other. This construction can be viewed as a convex counterpart of
Pontryagin duality.

\begin{definition}
    A \emph{linear representation} of a convex monoid $X$ consists of a vector
    space $V$ and a monoid homomorphism $\rho : X \to \Aut(V)$ that preserves
    convex combinations.
\end{definition}

As usual, a representation can also be written as an action of $X$ on $V$,
that is, a map $X \times V \to V$. A linear representation of a convex monoid
is then required to be affine in the first variable and linear in the second
variable. We will look at the linear representations of the convex monoid
$\Dst(G)$.

\begin{lemma}
    There is a one-to-one correspondence between representations of the finite
    group $G$ and linear representations of $\Dst(G)$.
    \label{lem:LinearRepresentationsDstG}
\end{lemma}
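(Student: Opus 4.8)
The plan is to exhibit mutually inverse constructions between the two classes of objects. First I would recall that a representation of $G$ is a group homomorphism $G \to \Aut(V)$ (equivalently an action $G \times V \to V$ linear in the second variable), and that $\Dst(G)$ is the free convex space on the set $G$, with the convex monoid multiplication determined on the basis vectors $\basis{g}$ by $\basis{g} \cdot \basis{h} = \basis{gh}$ (so that $\Dst(G)$ is the image of the group algebra structure under the state space functor, cf.\ Theorem~\ref{CorStateSpaceGroupAlgebra}). The one-point convex space maps to $\basis{1}$, which is the unit.

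Given a representation $\rho : G \to \Aut(V)$, I would extend it to a map $\tilde\rho : \Dst(G) \to \Aut(V)$ by $\tilde\rho\bigl(\sum_g a_g \basis{g}\bigr) = \sum_g a_g \rho(g)$; this is well defined as a linear map $V \to V$, and it lands in $\Aut(V)$ on the nose only when... actually one must be slightly careful here. A convex combination of invertible operators need not be invertible, so strictly the codomain should be read as ``the endomorphisms of $V$'' with the understanding (implicit in the paper's definition, which asks $\rho$ to ``preserve convex combinations'') that a linear representation of $\Dst(G)$ is an affine map $\Dst(G) \to \End(V)$ that is multiplicative and unital. With that reading, $\tilde\rho$ is manifestly affine in the convex structure, and multiplicativity $\tilde\rho(\basis{g}\basis{h}) = \rho(gh) = \rho(g)\rho(h) = \tilde\rho(\basis{g})\tilde\rho(\basis{h})$ extends bi-affinely to all of $\Dst(G)$, while $\tilde\rho(\basis{1}) = \rho(1) = \id_V$. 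Conversely, given a linear representation $\phi : \Dst(G) \to \End(V)$, restrict along the inclusion $G \hookrightarrow \Dst(G)$, $g \mapsto \basis{g}$, to obtain $\phi|_G : G \to \End(V)$; multiplicativity and unitality of $\phi$ force $\phi(\basis{g})\phi(\basis{g^{-1}}) = \phi(\basis{1}) = \id_V$, so each $\phi(\basis{g})$ is invertible, and $\phi|_G$ is a group homomorphism $G \to \Aut(V)$, i.e.\ a representation of $G$.

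Finally I would check that the two assignments are mutually inverse: starting from $\rho$, restricting $\tilde\rho$ to the basis vectors recovers $\rho$ since $\tilde\rho(\basis{g}) = \rho(g)$; starting from $\phi$, extending $\phi|_G$ affinely gives back $\phi$ because $\phi$ is itself affine and every element of $\Dst(G)$ is a convex combination of the $\basis{g}$, so an affine map is determined by its values on them (this is precisely the universal property of $\Dst(G)$ as the free convex space on $G$). The bijection is also compatible with morphisms of representations in the obvious way, although the statement only claims the object-level correspondence.

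The main obstacle, and really the only subtle point, is the invertibility issue: one has to argue that linear representations of $\Dst(G)$, despite being built from possibly-non-invertible convex combinations of operators, send each generator $\basis{g}$ to an invertible operator — which follows cleanly from unitality together with the relation $\basis{g}\basis{g^{-1}} = \basis{1}$ in the convex monoid $\Dst(G)$. Everything else is a routine application of the universal property of the free convex space $\Dst(G)$ on the finite set $G$.
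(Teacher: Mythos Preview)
Your proof is correct and follows essentially the same route as the paper: both rest on the universal property of $\Dst(G)$ as the free convex space on $G$, so that affine maps out of $\Dst(G)$ correspond bijectively to set maps out of $G$, and one then checks that this bijection restricts to monoid homomorphisms. Your version is more explicit (writing out the extension and restriction and verifying they are mutually inverse), and you rightly flag a subtlety the paper's proof passes over in silence: the codomain $\Aut(V)$ in the paper's definition is not itself closed under convex combinations, so the free-object correspondence really takes place in $\End(V)$, with invertibility on the generators recovered afterwards from $\basis{g}\,\basis{g^{-1}}=\basis{1}$.
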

\begin{proof}
    Representations of $G$ are monoid homomorphisms $G \to \Aut(V)$, since all
    monoid homomorphisms between groups are automatically group homomorphisms.
    Linear representations of $\Dst(G)$ are monoid homomorphisms $\Dst(G) \to
    \Aut(V)$ that are also morphisms of convex spaces. Since $\Dst(G)$ is the
    free convex space generated by $G$, it follows that there is a one-to-one
    correspondence between maps of sets $G \to \Aut(V)$ and maps of convex
    spaces $\Dst(G) \to \Aut(V)$. It is easy to check that this equivalence
    restricts to monoid homomorphisms.
\end{proof}

This result produces an easy way to construct the state space of $C[G]$ out of
the state space of $C(G)$, in the following steps:
\begin{enumerate}
    \item Let $V_1$, $\ldots$, $V_k$ be the irreducible linear representations
        of $\St(C(G))$.
    \item Form the convex sets of density matrices $\DM(V_i)$ for each $i$.
    \item The coproduct (in the category $\KConv$) of all $\DM(V_i)$ is the
        state space of $C[G]$.
\end{enumerate}
Since irreducible representations of $G$ are the same as irreducible linear
representations of $\Dst(G)$, this construction yields exactly the state space
of $C[G]$. A surprising fact is that it works in two directions: if we apply
exactly the same construction to the state space of $C[G]$, we end up with the
state space of $C(G)$.

\begin{proposition}
    Let $V_1, \ldots, V_k$ be the irreducible linear representations of the
    convex monoid $\St(C[G])$. Then the convex space $\DM(V_1) + \cdots +
    \DM(V_k)$ is isomorphic to $\St(C(G))$.
    \label{prop:StateSpaceFunctionAlgebraFromGroupAlgebra}
\end{proposition}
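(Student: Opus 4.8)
The plan is to identify the irreducible linear representations of the convex monoid $\St(C[G])$, show that they are finite in number, and that the coproduct of their density-matrix spaces reproduces $\St(C(G)) \cong \Dst(G)$. Recall from Theorem~\ref{CorStateSpaceGroupAlgebra} that $\St(C[G]) \cong \DM(V_1) + \cdots + \DM(V_k)$ as a convex monoid, with multiplication satisfying $(\sigma\cdot\tau)(\basis{g}) = \sigma(\basis{g})\tau(\basis{g})$, and that $\Ef(C[G]) \cong \Ef(V_1) \times \cdots \times \Ef(V_k)$. Dually, $\St(C[G])$ is the state space of the finite-dimensional algebra $C[G] \cong M_{d_1}(\CC) \oplus \cdots \oplus M_{d_k}(\CC)$ with $d_i = \dim V_i$, so its points are tuples of density matrices, one on each matrix block.

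First I would determine all linear representations $\rho : \St(C[G]) \to \Aut(W)$ that preserve convex combinations. The key observation is that such a $\rho$ is an affine monoid map out of a coproduct of convex sets, hence is determined by its restrictions to each summand $\DM(V_i)$; and since $\DM(V_i)$ is a convex set whose affine span is the space of trace-$1$ self-adjoint operators on $V_i$, an affine map out of it extends to a linear map on that span, i.e. is governed by a linear map $M_{d_i}(\CC) \to \End(W)$. Using the monoid structure — whose multiplication on the state space dualizes $\hat\Delta(\basis g) = \basis{(g,g)}$, i.e. is componentwise/diagonal — one shows that the indecomposable such representations are precisely given by evaluating a single density matrix against a single matrix block, which recovers the original irreducible representations $V_1, \ldots, V_k$ of $G$ itself. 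In other words, I expect to prove: the irreducible linear representations of $\St(C[G])$ are exactly $V_1, \ldots, V_k$, in bijection with the irreducible $G$-representations — mirroring Lemma~\ref{lem:LinearRepresentationsDstG} but now with the roles swapped via Pontryagin duality. Concretely, a representation of $\St(C[G])$ should correspond to a representation of the dual Hopf algebra $C(G)$, and the irreducible corepresentations of $C(G)$, equivalently the irreducible representations of its dual $C[G]$, are the characters — that is, the group elements $g \in G$, giving one-dimensional representations; but since we want linear representations of the convex monoid that are genuinely affine and the relevant "group-like" structure sits inside $\St(C[G])$ differently, the irreducibles come out as the $V_i$. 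I would verify this by a direct Schur-type argument: decompose $W$ under the action, use the idempotents coming from the block decomposition of $C[G]$, and apply Schur's Lemma as in Proposition~\ref{PropEquivariantEffects}.

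Once the irreducibles are identified as $V_1, \ldots, V_k$ with $\dim V_i = d_i$, the remaining step is to compute $\DM(V_1) + \cdots + \DM(V_k)$ and recognize it as $\St(C(G)) \cong \Dst(G)$. For this I would use Kadison duality (Theorem~\ref{thm:KadisonDuality}): it suffices to show the dual effect module of $\DM(V_1) + \cdots + \DM(V_k)$ is isomorphic to $\Ef(C(G)) \cong [0,1]^G$, the effect module of functions $G \to [0,1]$. Since $\Hom$ turns coproducts into products and $\Hom(\DM(V_i),[0,1]) \cong \Ef(V_i)$ by part (3) of Examples~\ref{ex:KadisonDuality}, the dual effect module is $\Ef(V_1) \times \cdots \times \Ef(V_k)$. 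Thus the claim reduces to the identity $\Ef(V_1) \times \cdots \times \Ef(V_k) \cong [0,1]^G$ as effect modules — but this is false in general as stated, so here is where the monoid structure must be invoked: we want the isomorphism of convex monoids, and the monoid structure on $\DM(V_1)+\cdots+\DM(V_k)$ coming from the group multiplication on the $V_i$ must match the pointwise/convolution structure on $\Dst(G)$. The correct bookkeeping is that $\sum_i \dim(\DM(V_i)^{\mathrm{aff}}) + 1 = \sum_i d_i^2 = |G|$, and the coproduct of the $k$ simplex-like pieces, glued according to the multiplication, has the right affine dimension $|G| - 1$ to be $\Dst(G)$; one then exhibits an explicit affine monoid isomorphism sending a tuple of density matrices $(\rho_1, \ldots, \rho_k)$ to the probability distribution on $G$ with weights built from the $\rho_i$ via the irreducible characters, i.e. the inverse Fourier transform, and checks it respects convex combinations, is a homeomorphism, and intertwines the monoid operations.

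The main obstacle, I expect, is the classification of irreducible linear representations of $\St(C[G])$ — establishing rigorously that they are finite in number and coincide with the $V_i$. Unlike the $\Dst(G)$ case, where freeness (Lemma~\ref{lem:LinearRepresentationsDstG}) makes representations transparent, here $\St(C[G])$ is not free, so I must argue directly from the coproduct-of-density-matrices description and the diagonal monoid law. The cleanest route is probably to pass through Kadison/Gelfand-style duality: a linear representation of the convex monoid $\St(C[G])$ should dualize to a corepresentation-like structure on the effect module $\Ef(C[G])$ with its comultiplication, and the finite-dimensional Hopf algebra $C[G]$ has, by Peter–Weyl for finite quantum groups, exactly $|G|$-worth of "matrix coefficients" organized into $k$ irreducible blocks of sizes $d_i$ — which are exactly the $V_i$. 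Making this precise without developing a full duality theory for effect-module Hopf algebras (which the paper defers) will require some care, but a hands-on Schur's Lemma argument should suffice for the finite case at hand.
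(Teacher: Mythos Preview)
Your proposal contains a genuine error in the identification of the irreducible linear representations of $\St(C[G])$: you conclude that they are the irreducible $G$-representations $V_1,\ldots,V_k$ (with $\dim V_i=d_i$ possibly $>1$), but in fact they are all one-dimensional and indexed by the group elements $g\in G$. The point you are missing is that the multiplication on $\St(C[G])$ is $(\sigma\cdot\tau)(\basis{g})=\sigma(\basis{g})\tau(\basis{g})$, so this convex monoid is \emph{commutative}; hence every irreducible representation is one-dimensional. The paper then shows that the one-dimensional representations are exactly the evaluation maps $\rho_g(\sigma)=\sigma(\basis{g})$, by a double-dual argument identifying an affine character with an element $a\in C[G]$ and using multiplicativity to force $a=\basis{g}$. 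With $|G|$ one-dimensional irreducibles, each $\DM(V_i)$ is a single point and the coproduct is $\Dst(G)\cong\St(C(G))$, which is the whole content of the proposition. You actually brush past the correct answer (``the group elements $g\in G$, giving one-dimensional representations'') but then talk yourself out of it.

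You can see your identification fails from your own computation: with your $V_i$ the coproduct $\DM(V_1)+\cdots+\DM(V_k)$ is precisely $\St(C[G])$ by Theorem~\ref{CorStateSpaceGroupAlgebra}, not $\St(C(G))$, and the two are not isomorphic as convex spaces for non-abelian $G$ (e.g.\ $\St(C[S_3])$ is not a simplex). You notice the resulting contradiction $\Ef(V_1)\times\cdots\times\Ef(V_k)\cong[0,1]^G$ is false, but attribute it to needing the monoid structure rather than to the misidentified irreducibles. No amount of monoid-structure bookkeeping or Fourier-transform formulas will repair this, because the underlying convex spaces already disagree. The Schur-type argument you sketch, using the block idempotents of $C[G]$, is the argument for Proposition~\ref{PropEquivariantEffects} and classifies intertwiners for the $G$-action, not representations of the convex monoid $\St(C[G])$; the latter is governed by the \emph{comultiplication} of $C[G]$ (equivalently the algebra structure of $C(G)$), which is where commutativity and the group-element parametrization enter.
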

\begin{proof}
    We will determine the irreducible representations $V_i$.
    Recall that the multiplication on $\St(C[G])$ was given by $\sigma \cdot
    \tau(\basis{g}) = \sigma(\basis{g}) \tau(\basis{g})$. Therefore this
    convex monoid is commutative. All irreducible representations of a
    commutative monoid are 1-dimensional.
    Each $g \in G$ gives a 1-dimensional linear representation $\rho_g :
    \St(C[G]) \to \CC$ by $\rho_g(\sigma) = \sigma(g)$.

    We will now check that all 1-dimensional linear representations are of the
    form $\rho_g$ for some $g\in G$. Let $\rho : \St(C[G]) \to \mathbb{C}$ be
    an arbitrary representation. Then the map $\rho$ extends to a function
    $\Hom(C[G], \mathbb{C}) \to \mathbb{C}$ in the double dual of $C[G]$,
    hence there exists $a \in C[G]$ such that $\sigma(a) = \rho(\sigma)$ for
    all states $\sigma$ on $C[G]$. We will show that $a$ is actually an
    element in $G \subseteq C[G]$. Express $a$ as $a = a_1 \lambda_{g_1} +
    \cdots + a_n \lambda_{g_n}$. Then, for any two states $\sigma$ and $\tau$,
    \[ \rho(\sigma \tau) = \sigma \tau(\sum_i a_i \lambda_{g_i}) = \sum_i a_i
    \sigma(\lambda_{g_i}) \tau(\lambda_{g_i}) \]
    and
    \[ \rho(\sigma) \rho(\tau) = \sigma(\sum_i a_i \lambda_{g_i}) \tau(\sum_j
        a_j \lambda_{g_j}) = \sum_{i,j} a_i a_j \sigma(\lambda_{g_i})
        \tau(\lambda_{g_j}). \]
    The map $\rho$ is a representation, so these two expressions must be equal
    for all states $\sigma$ and $\tau$. Comparing coefficients shows that at
    most one $a_i$ is equal to 1, and all others are 0. The element $a$ cannot
    be identically 0, since $\rho$ preserves 1. Hence $a$ is equal to
    $\lambda_g$ for some $g\in G$, which proves that the maps $\rho_g$ are
    indeed the only 1-dimensional representations.

    There is only one density matrix on any 1-dimensional space. Therefore the
    space $\DM(V_1) + \cdots + \DM(V_k)$ is a coproduct of $\# G$ copies of
    the one-point space, which is $\Dst(G)$.
\end{proof}

We have shown that if we start with the convex monoid $\St(C(G)) \cong
\Dst(G)$ and apply the above construction twice, then we get back a convex
space that is isomorphic to the underlying space of the original convex
monoid. Now we wish to show that the multiplication is also preserved in this
construction, so that we obtain an isomorphism of convex monoids, rather than
just convex spaces. For this we have to endow the coproduct of density
matrices with a multiplication. It is useful to have an explicit isomorphism
between $\DM(V_1) + \cdots + \DM(V_k)$ and $\St(C[G])$. 

\begin{lemma}
    Let $(V_1, \rho_1), \ldots, (V_k,\rho_k)$ be the irreducible
    representations of $G$.  The map $\Phi : \DM(V_1) + \cdots + \DM(V_k) \to
    \St(C[G])$ determined by $\Phi(T)(\lambda_g) = \tr(T \rho_i(g))$
    for $T \in \DM(V_i)$ is an isomorphism of convex spaces.
\end{lemma}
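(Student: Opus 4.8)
The plan is to verify that the stated map $\Phi$ is well-defined, affine, bijective, and continuous, exploiting the description of the coproduct $\DM(V_1) + \cdots + \DM(V_k)$ from the Proposition on coproducts in $\KConv$ together with the concrete form of $\St(C[G])$ from Theorem~\ref{CorStateSpaceGroupAlgebra}. First I would pin down the domain: a point of $\DM(V_1)+\cdots+\DM(V_k)$ is an iterated convex combination $\sum_{i=1}^k r_i\, T_i$ with $r_i \geq 0$, $\sum_i r_i = 1$, and $T_i \in \DM(V_i)$ (the embedding into the ambient vector space records the weights $r_i$, but on points this is the data that matters). On such a point $\Phi$ must send it to the functional $\lambda_g \mapsto \sum_i r_i \tr(T_i \rho_i(g))$, extended linearly; for $\Phi$ to be affine with respect to the coproduct's convex structure this is forced, so the only real content in ``well-defined'' is checking that this functional is actually a state on $C[G]$, i.e.\ a morphism of effect modules $\Ef(C[G]) \to [0,1]$, equivalently a positive unital linear functional on $C[G]$. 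Positivity follows because each $T_i$ is a positive trace-class operator and each $\rho_i$ is a unitary representation, so $\rho \mapsto \tr(T_i \rho_i(a^*a)) = \tr(\rho_i(a)^\dagger T_i \rho_i(a)) \geq 0$; unitality follows from $\tr(T_i \rho_i(e)) = \tr(T_i) = 1$ and $\sum_i r_i = 1$.

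Next I would establish bijectivity. Using Theorem~\ref{CorStateSpaceGroupAlgebra}, $\St(C[G])$ is the coproduct $\DM(V_1) + \cdots + \DM(V_k)$ via the duality between $\BEMod$ and $\KConv$ applied to the product decomposition $\Ef(C[G]) \cong \Ef(V_1)\times\cdots\times\Ef(V_k)$; concretely, under the isomorphism $\Ef(H) \cong \Hom(\DM(H),[0,1])$ from part~(3) of Examples~\ref{ex:KadisonDuality}, a state on $\Ef(H)$ corresponds to a density matrix via $a \mapsto \tr(\rho a)$. The key point is that the isomorphism $C[G] \cong \bigoplus_i \mathrm{End}(V_i)$ (Wedderburn, or equivalently Proposition~\ref{PropEquivariantEffects} applied to the regular representation) sends $\lambda_g$ to $(\rho_1(g), \ldots, \rho_k(g))$. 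Hence a density matrix $T \in \DM(V_i)$, viewed inside $\DM(C[G])$ supported on the $i$-th block, pairs with $\lambda_g \in C[G]$ exactly by $\tr(T\rho_i(g))$ — which is the formula defining $\Phi$. So $\Phi$ is nothing but the canonical identification of the coproduct $\DM(V_1)+\cdots+\DM(V_k)$ with $\St(C[G])$ coming from the two dualities, restated on basis vectors; in particular it is an isomorphism of convex spaces. I would also note that this dictionary makes continuity automatic, since the weak-* topology on states pulls back to the product topology on the coproduct and the pairing with each fixed $\lambda_g$ is continuous.

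The step I expect to be the main obstacle — or at least the one requiring the most care — is making precise that $\Phi$ as *defined by a formula on the basis $\{\lambda_g\}$* coincides with the *abstract* isomorphism coming from the duality, rather than just asserting it. Concretely this means checking: (i) that the Wedderburn isomorphism $C[G] \to \bigoplus_i \mathrm{End}(V_i)$ does send $\lambda_g \mapsto (\rho_i(g))_i$ (this is just the statement that the regular representation decomposes with multiplicity $\dim V_i$ into $V_i$, and that left multiplication by $\lambda_g$ acts as $\rho_i(g)$ on each copy), and (ii) that under this isomorphism the effect-module structures match up so that $\Ef(C[G]) \cong \prod_i \Ef(V_i)$ identifies, via Examples~\ref{ex:KadisonDuality}(3), with the dual of the coproduct $\coprod_i \DM(V_i)$ with the pairing $\langle T, a\rangle = \tr(Ta)$ block-by-block. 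Once (i) and (ii) are in place, $\Phi$ is manifestly the composite isomorphism and there is nothing left to prove; all the convex-structure and topology conditions are inherited from the duality. I would therefore organize the proof as: recall the Wedderburn picture and the pairing, observe that the given formula is exactly this pairing restricted to basis vectors, and conclude that $\Phi$ is the isomorphism of convex spaces furnished by Theorem~\ref{CorStateSpaceGroupAlgebra}.
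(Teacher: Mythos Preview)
Your proposal is correct and follows essentially the same route as the paper: both arguments rest on the Wedderburn map $\Psi\colon C[G]\to \prod_i \End(V_i)$, $\lambda_g\mapsto(\rho_i(g))_i$, and then dualize via the state functor (using $\DM(V_i)\cong\St(\End(V_i))$ through $T\mapsto \tr(T\cdot-)$) to identify $\Phi$ with $\St(\Psi)$. The paper is slightly more self-contained in that it proves $\Psi$ is an isomorphism directly (injectivity via the regular representation, then a dimension count) rather than invoking Wedderburn or Theorem~\ref{CorStateSpaceGroupAlgebra}, and it skips your well-definedness check because $\St(\Psi)$ lands in states automatically; but the substance is the same.
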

\begin{proof}
    Consider the map $\Psi: C[G] \to \End(V_1) \times \cdots \times
    \End(V_k)$ of \(\Cst\)\nb-algebras, on basis vectors determined by $\lambda_g
    \mapsto (\rho_1(g), \ldots, \rho_k(g))$. We claim that this map is
    injective. Suppose that $a,b \in C[G]$ are such that $\rho_i(a) =
    \rho_i(b)$ for all $i$.  Then $a$ and $b$ act in the same way in all
    irreducible representations of $G$. Since any representation of $G$ can be
    decomposed into irreducibles, $a$ and $b$ act in the same way in all
    representations of $G$. In particular, they have the same action on the
    regular representation $C[G]$. Thus $a = a \cdot e = b \cdot e =
    b$. Since $\Psi$ is injective and its domain has the same dimension as its
    codomain, it is an isomorphism.

    Taking states of a \(\Cst\)\nb-algebra provides a contravariant functor $\St :
    \Cstar \to \KConv$. Therefore, applying the state functor to $\Psi$ gives
    a map $\St(\End(V_1)) + \cdots + \St(\End(V_k)) \to \St(C[G])$.
    There is an isomorphism $\alpha : \DM(V_i) \to \St(\End(V_i))$ given by
    $\alpha(\rho)(A) = \tr(\rho A)$, and hence $\St(\Psi) = \Phi$. Since
    $\Psi$ is an isomorphism and $\St$ is a functor, $\Phi$ is also an
    isomorphism.
\end{proof}

Using this isomorphism, the multiplication on the coproduct of density
matrices can be described explicitly. Since we are working in a coproduct, it
suffices to describe $T \cdot S$, where $T \in \DM(V_i)$ and $S \in \DM(V_j)$.
Applying the isomorphism $\Phi$ from the lemma above gives states $\lambda_g
\mapsto \tr(T \rho_i(g))$ and $\lambda_g \mapsto \tr(S \rho_j(g))$ on
$\CC[G]$. Multiplying these states pointwise and using properties of the trace
gives the map $\lambda_g \mapsto \tr((T \otimes S) (\rho_i \otimes
\rho_j)(g))$. Since $\Phi$ is an isomorphism, there is a unique $\sum_i
\lambda_i T_i \in \DM(V_1) + \cdots + \DM(V_k)$ for which
\[ \sum_i \lambda_i \tr(T_i \rho_i(g)) = \tr((T \otimes S) (\rho_i \otimes
    \rho_j)(g)). \]
We define $T \cdot S$ to be this convex combination $\sum_i \lambda_i T_i$.
With the proposition and lemma above, we have now proven the following result.

\begin{theorem}
    Let $G$ be a finite group,
    and let $V_1, \ldots, V_k$ be the irreducible linear representations of
    the convex monoid $\St(C(G))$. Then the convex monoid $\DM(V_1) + \cdots +
    \DM(V_k)$ with multiplication described above is isomorphic to the convex
    monoid $\St(C[G])$ with pointwise multiplication.
\end{theorem}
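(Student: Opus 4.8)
The plan is to assemble the theorem from the two preceding results almost mechanically, adding only the verification that the bijection of underlying convex spaces respects the multiplications. Concretely, Proposition~\ref{prop:StateSpaceFunctionAlgebraFromGroupAlgebra} already identifies the irreducible linear representations $V_1,\ldots,V_k$ of $\St(C(G))$: since $\St(C(G))\cong\Dst(G)$ and, by Lemma~\ref{lem:LinearRepresentationsDstG}, the linear representations of $\Dst(G)$ are exactly the representations of $G$, the $V_i$ are precisely the irreducible $G$-representations. So the underlying convex space $\DM(V_1)+\cdots+\DM(V_k)$ is by the displayed lemma above carried isomorphically onto $\St(C[G])$ by the map $\Phi$ with $\Phi(T)(\basis{g})=\tr(T\rho_i(g))$ for $T\in\DM(V_i)$. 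The only thing left is to check that $\Phi$ is multiplicative, where the source carries the multiplication "described above" and the target carries the pointwise multiplication $(\sigma\cdot\tau)(\basis{g})=\sigma(\basis{g})\tau(\basis{g})$ from Theorem~\ref{CorStateSpaceGroupAlgebra}.

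First I would recall, or rather re-read, the definition of $T\cdot S$ for $T\in\DM(V_i)$, $S\in\DM(V_j)$: it is the unique element $\sum_\ell\lambda_\ell T_\ell\in\DM(V_1)+\cdots+\DM(V_k)$ such that $\sum_\ell\lambda_\ell\tr(T_\ell\rho_\ell(g))=\tr((T\otimes S)(\rho_i\otimes\rho_j)(g))$ for all $g$. But $\tr((T\otimes S)(\rho_i\otimes\rho_j)(g))=\tr(T\rho_i(g))\tr(S\rho_j(g))$ by the multiplicativity of trace on tensor products, and this is exactly $\Phi(T)(\basis{g})\cdot\Phi(S)(\basis{g})$, i.e.\ the pointwise product of the two states. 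Hence $\Phi(T\cdot S)=\Phi(T)\cdot\Phi(S)$ directly from the defining property of $T\cdot S$ together with injectivity of $\Phi$ (which guarantees the element $\sum_\ell\lambda_\ell T_\ell$ is well defined and that matching on all $\basis{g}$ forces equality). Extending by bi-affineness — legitimate since both multiplications are bi-affine and $\Phi$ is affine — gives multiplicativity on all of the coproduct. One should also note $\Phi$ sends the unit to the unit: the unit of $\St(C[G])$ is the counit $\varepsilon$ with $\varepsilon(\basis{g})=1$ for all $g$, and taking $T$ to be the unique density matrix on the trivial representation $V_i=\CC$ gives $\Phi(T)(\basis{g})=\tr(T\cdot 1)=1$, so the trivial-representation component of the coproduct maps to $\varepsilon$; declaring that component the unit of the coproduct monoid makes $\Phi$ unital.

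I do not expect a genuine obstacle here — the content is entirely in the previous proposition and lemma. The one point deserving a little care is purely bookkeeping: the definition of $T\cdot S$ in the paragraph before the theorem uses the ambient isomorphism $\Phi$ to transport the pointwise product back along $\Phi^{-1}$, so the statement "$\Phi$ is multiplicative" is close to a tautology once one unwinds it; what actually needs checking is that this transported operation is \emph{well defined and associative with a unit}, i.e.\ that it really makes the coproduct into a convex monoid. This is automatic because $\Phi$ is an isomorphism of convex spaces and $\St(C[G])$ is already known to be a convex monoid (Theorem~\ref{CorStateSpaceGroupAlgebra}), so transporting the structure along $\Phi^{-1}$ yields a convex monoid and $\Phi$ a convex-monoid isomorphism by construction. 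Thus the proof is: invoke Proposition~\ref{prop:StateSpaceFunctionAlgebraFromGroupAlgebra} to pin down the $V_i$, invoke the displayed lemma for the convex-space isomorphism $\Phi$, transport the monoid structure of $\St(C[G])$ along $\Phi$ to see it coincides with the multiplication described above (via $\tr((T\otimes S)(\rho_i\otimes\rho_j)(g))=\tr(T\rho_i(g))\tr(S\rho_j(g))$), and conclude that $\Phi$ is an isomorphism of convex monoids.
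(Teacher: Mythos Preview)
Your proposal is correct and follows essentially the same route as the paper, which simply declares the theorem proven by the preceding proposition and lemma together with the paragraph defining the multiplication on the coproduct by transport along $\Phi$. One small slip: Proposition~\ref{prop:StateSpaceFunctionAlgebraFromGroupAlgebra} concerns the irreducible representations of $\St(C[G])$, not of $\St(C(G))$; for the direction stated in the theorem you only need Lemma~\ref{lem:LinearRepresentationsDstG} (which you do invoke immediately afterward) and the $\Phi$-lemma, exactly as you then use them.
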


\section{Convex Pontryagin duality for a tensor product}
\label{sec:TensorProductDuality}

The category of finite-dimensional Hopf algebras is self-dual. The dual of a
finite-dimensional Hopf algebra $A$ is $\hat{A} = \{ f : A \to \CC \mid f
\text{ linear} \}$. Its multiplication is derived from the comultiplication on
$A$, and vice versa.
The Hopf algebras $C(G)$ and $C[G]$ coming from a finite group $G$ are duals
of each other via this construction.

Let $A$ be either $C(G)$ or $C[G]$. The main result from the previous section
states that if $V_1, \ldots, V_k$ are the irreducible representations of the
convex monoid $\St(A)$, then $\DM(V_1) + \cdots + \DM(V_k)$ is isomorphic to
$\St(\hat{A})$. This raises the question if this holds for all Hopf algebras.
We do not yet know if this is the case in general, but we will now discuss
another example of a Hopf algebra for which it holds, so this may be promising
for the general case.

Let $G$ be a finite group. Consider the Hopf algebra $A = C(G) \otimes C[G]$,
i.e.  the tensor product of the function algebra and the group algebra. This
Hopf algebra is neither commutative nor cocommutative. Since dualizing
preserves tensor products, the dual of $A$ is isomorphic to $A$ itself. Thus
the statement that connects the state space of $A$ to its dual amounts to the
following.

\begin{proposition}
    Let $V_1, \ldots, V_k$ be the irreducible representations of the convex
    monoid $\St(C(G) \otimes C[G])$. Then $\DM(V_1) + \cdots + \DM(V_k)$ is
    isomorphic to $\St(C(G) \otimes C[G])$.
\end{proposition}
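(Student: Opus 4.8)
The plan is to compute the convex monoid $\St(C(G) \otimes C[G])$ concretely, identify its irreducible linear representations, and check that the coproduct of density matrices on them reassembles the same convex monoid. First I would analyze the underlying C*-algebra. We have $C(G) \otimes C[G] \cong C(G) \otimes \big(\End(V_1) \times \cdots \times \End(V_k)\big) \cong \prod_{s \in G} \big(\End(V_1) \times \cdots \times \End(V_k)\big)$, using the Peter--Weyl decomposition of $C[G]$ from Theorem~\ref{CorStateSpaceGroupAlgebra}. Dualizing, the state space is a coproduct over $(s,i) \in G \times \{1,\dots,k\}$ of the spaces $\St(\End(V_i)) \cong \DM(V_i)$, so as a convex \emph{space} we already know $\St(C(G) \otimes C[G]) \cong \coprod_{s \in G} \big(\DM(V_1) + \cdots + \DM(V_k)\big)$. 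The multiplication comes from dualizing the comultiplication on $C(G) \otimes C[G]$, which is the tensor product of $\Delta$ on $C(G)$ (given by $\varphi \mapsto \varphi(gh)$) and $\hat\Delta$ on $C[G]$ (given by $\lambda_g \mapsto \lambda_{(g,g)}$); on states this mixes the pointwise/convolution behaviour of the $\Dst(G)$ factor with the pointwise behaviour of the $\St(C[G])$ factor established in the previous section.

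Next I would determine the irreducible linear representations of this convex monoid. By the tensor structure, $\St(C(G) \otimes C[G]) \cong \Dst(G) \otimes \St(C[G])$ as convex monoids, and a linear representation of a tensor product of convex monoids should correspond to a bi-affine bi-multiplicative pairing, hence (because $\Aut(V)$ is a group) to a pair consisting of a linear representation of $\Dst(G)$ and a commuting linear representation of $\St(C[G])$ on the same space $V$. By Lemma~\ref{lem:LinearRepresentationsDstG}, linear representations of $\Dst(G)$ are representations of $G$; by the computation in Proposition~\ref{prop:StateSpaceFunctionAlgebraFromGroupAlgebra}, the irreducible representations of the commutative monoid $\St(C[G])$ are the one-dimensional maps $\rho_g(\sigma) = \sigma(\lambda_g)$, indexed by $g \in G$. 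An irreducible joint representation on a common space $V$ is therefore an irreducible representation $(\rho_i, V_i)$ of $G$ tensored with a one-dimensional character $\rho_g$; since twisting an irreducible $G$-representation by a one-dimensional representation (of $\St(C[G])$, i.e. scaling by $\sigma(\lambda_g)$) does not change which irreducible of $G$ it is, the irreducibles of $\St(C(G) \otimes C[G])$ are exactly parametrized by pairs $(g, i) \in G \times \{1, \dots, k\}$, each of dimension $\dim V_i$.

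Having identified the irreducibles, I would form $\DM(V_{(g,i)}) \cong \DM(V_i)$ for each pair and take the coproduct over $G \times \{1,\dots,k\}$; as a convex space this is manifestly $\coprod_{s \in G}(\DM(V_1) + \cdots + \DM(V_k))$, which matches the underlying space of $\St(C(G) \otimes C[G])$ computed in the first step. The remaining work is to equip this coproduct with the multiplication coming from the pointwise-product-of-traces recipe of the previous section (applied now to the pairs $(g,i)$) and to verify it agrees with the dualized comultiplication on $C(G) \otimes C[G]$. Concretely, I would mimic the lemma preceding the last theorem of Section~\ref{sec:ConvexPontryaginDuality}: build an isomorphism $\Psi$ from $C(G) \otimes C[G]$ to $\prod_{(g,i)} \End(V_i)$ as C*-algebras whose component at $(g,i)$ sends a basis element $\delta_s \otimes \lambda_h$ to $\delta_s(g)\,\rho_i(h)$ (equivalently, evaluate the $C(G)$ part at $g$ and act by $\rho_i$ on the $C[G]$ part), check injectivity by a dimension count, and then apply the contravariant state functor, using $\alpha : \DM(V_i) \xrightarrow{\sim} \St(\End(V_i))$, $\alpha(\rho)(A) = \tr(\rho A)$, to see that $\St(\Psi)$ is precisely the coproduct-of-density-matrices description. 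Because $\Psi$ is an isomorphism of C*-algebras intertwining the comultiplications by construction, $\St(\Psi)$ is automatically an isomorphism of convex monoids, which finishes the proof.

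The main obstacle I expect is the classification of irreducible linear representations of the tensor-product convex monoid: one must be careful that a linear representation of $\Dst(G) \otimes \St(C[G])$ really does decompose as a commuting pair of representations of the two factors landing in the \emph{same} general linear group, and that irreducibility of the joint representation forces irreducibility of each factor's action (here the one-dimensionality of the $\St(C[G])$-irreducibles makes this clean, since tensoring with a character preserves irreducibility and the $G$-type). Everything after that — the coproduct description, the construction of $\Psi$, the dimension count, and transporting the multiplication through $\St$ — is a direct adaptation of the arguments already given in Section~\ref{sec:ConvexPontryaginDuality} and should go through without new ideas.
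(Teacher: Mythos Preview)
Your approach is essentially the same as the paper's: decompose $\St(C(G)\otimes C[G])$ as $\St(C(G))\otimes\St(C[G])\cong\Dst(G)\otimes\St(C[G])$, identify the irreducible linear representations as tensor products $W_i\otimes W'_j$ of an irreducible $G$-representation with a one-dimensional representation of $\St(C[G])$, and then reassemble the coproduct $\sum_{i,j}\DM(W_i\otimes W'_j)$ back into the same tensor. The paper only proves the convex-space isomorphism (which is all the proposition claims) and asserts the ``irreducibles of a tensor are tensors of irreducibles'' step without further comment, so your extra care about that step and about transporting the multiplication is additional detail beyond what the paper supplies.
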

\begin{proof}
    We will first show that $\St(C(G) \otimes C[G])$ is isomorphic to
    $\St(C(G)) \otimes \St(C[G])$. Since the C*-algebra $C(G)$ is
    commutative and finite-dimensional, it is isomorphic to $\CC^n$ for some
    $n$. Hence we have
    \[ \St(C(G) \otimes C[G]) \cong \St(\CC^n \otimes C[G]) \cong
        \St(C[G]^{\oplus n}). \]
    The state space of a direct sum is the coproduct of state spaces, so this
    is isomorphic to
        \begin{align*}
            \St(C[G]) + \cdots + \St(C[G]) &\cong
            (1 + \cdots + 1) \otimes \St(C[G]) \\
            &\cong \Dst(n) \otimes \St(C[G]) \\
            &\cong \St(C(G)) \otimes \St(C[G]).
        \end{align*}
    
    The irreducible representations of a tensor product are tensor products of
    irreducible representations. The irreducible representations of
    $\St(C(G))$ are precisely those of $G$; call these $W_1, \ldots, W_m$.
    There are $\# G = n$ irreducible representations of $C[G]$ and these are
    all one-dimensional. Denote these by $W'_1, \ldots, W'_n$. Then the
    irreducible representations of $\St(C(G) \otimes C[G])$ are the tensor
    products $W_i \otimes W'_j$. Therefore the sum of density matrices is
        \begin{align*}
            \sum_{i,j} \DM(W_i \otimes W'_j) &\cong \left( \sum_i \DM(W_i)
            \right)^{+ n} \\
            &\cong  (1 + \cdots + 1) \otimes \sum_i \DM(W_i) \\
            & \cong  \Dst(G) \otimes \sum_i \DM(W_i) \\
            & \cong  \St(C(G)) \otimes \St(C[G]) \\
            & \cong  \St(C(G) \otimes C[G]),
        \end{align*}
    which is what we wanted to show.
\end{proof}

\section*{Acknowledgements}
We would like to thank Johan Commelin, Robert Furber and Philip Scott for
helpful discussions.  The first author has been financially supported by the
Netherlands Organisation for Scientific Research (NWO) under TOP-GO grant 
no.\ 613.001.013 (The logic of composite quantum systems). The second author 
has been supported by Fields--Ontario postdoctoral fellowship, NSERC and ERA at 
the University of Ottawa and Carleton University.

\begin{bibdiv}
\begin{biblist}
\bib{Foulis-Bennett:Effect_alg_qnt_log}{article}{
     author={Bennett, Mary Katherine},
     author={Foulis, David James},
     title={Effect algebras and unsharp quantum logics},
     journal={Found. Phys.},
     volume={24},
     date={1994},
     number={10},
     pages={1331--1352},
     issn={0015-9018},
     review={\MRref{1304942}{95k:06020}},
     doi={10.1007/BF02283036},
}

\bib{Dvurecenskij-Pulmannov:New_trends}{book}{
     author={Dvure{\v{c}}enskij, Anatolij},
     author={Pulmannov{\'a}, Sylvia},
     title={New trends in quantum structures},
     series={Mathematics and its Applications},
    volume={516},
    publisher={Kluwer Academic Publishers, Dordrecht; Ister Science, Bratislava},
    date={2000},
    pages={xvi+541},
    isbn={0-7923-6471-6; 80-88683-23-8},
    review={\MRref{1861369}{2002h:81021}},
    doi={10.1007/978-94-017-2422-7},
}

\bib{Furber-Jacobs:Prob_Gelfand_duality}{article}{
    author={Furber, Robert},
    author={Jacobs, Bart},
    title={From Kleisli categories to commutative C*-algebras: probabilistic
        Gelfand duality},
    journal={Logical methods in computer science},
    volume={11},
    number={2:5},
    date={2015},
    pages={1--28},
    doi={10.2168/LMCS-11(2:5)2015},
}

\bib{Gudder-Pulmannova:Rep_the_conv}{article}{
     author={Gudder, Stanley P.},
     author={Pulmannov{\'a}, Sylvia},
     title={Representation theorem for convex effect algebras},
     journal={Comment. Math. Univ. Carolin.},
     volume={39},
     date={1998},
     number={4},
     pages={645--659},
     issn={0010-2628},
     review={\MRref{1715455}{2000i:81009}},
}

\bib{Jacobs-Mandemaker:Expec_monad}{article}{
    author={Jacobs, Bart},
    author={Mandemaker, Jorik},
    title={The Expectation Monad in Quantum Foundations},
    status={eprint},
    note={\arxiv{1112.3805v2 }},
    date={2012},
}    

\bib{Mislove:Prob_monads}{article}{
    author={Mislove, Michael},
    title={Probabilistic Monads, Domains and Classical Information},
    status={eprint},
    note={\arxiv{1207.7150v1}},
    date={2012},
}    

\bib{Takesaki_Op_Alg_2}{book}{
  author={Takesaki, Masamichi},
  title={Theory of operator algebras. II},
  series={Encyclopaedia of Mathematical Sciences},
  volume={125},
  note={Operator Algebras and Non-commutative Geometry, 6},
  publisher={Springer},
  place={Berlin},
  date={2003},
  pages={xxii+518},
  isbn={3-540-42914-X},
  review={\MRref{1943006}{2004g:46079}},
  doi={10.1007/978-3-662-10451-4},
}

\bib{Timmermann:An_inv_to_QG_and_duality}{book}{
  author={Timmermann, Thomas},
  title={An invitation to quantum groups and duality},
  publisher={European Mathematical Society (EMS)},
  place={Z\"urich},
  date={2008},
  pages={xx+407},
  isbn={978-3-03719-043-2},
  review={\MRref{2397671}{2009f:46079}},
  doi={10.4171/043},
}

\bib{Woronowicz:Compact_pseudogroups}{article}{
  author={Woronowicz, Stanis\l aw Lech},
  title={Compact matrix pseudogroups},
  journal={Comm. Math. Phys.},
  volume={111},
  date={1987},
  number={4},
  pages={613--665},
  issn={0010-3616},
  eprint={http://projecteuclid.org/euclid.cmp/1104159726},
  review={\MRref{901157}{88m:46079}},
}

\bib{Semadeni:Categorical_convexity}{inproceedings}{
  title={Categorical methods in convexity},
  author={Semadeni, Zbigniew},
  booktitle={Proceedings of the Colloquium on Convexity},
  pages={281--307},
  year={1965},
}

  \end{biblist}
\end{bibdiv}
\end{document}